\newtheorem{theorem}{Theorem}[section]
\newtheorem{lemma}[theorem]{Lemma}
\newtheorem{remark}[theorem]{Remark}
\begin{document}

\newcommand{\norm}[1]{\left\Vert#1\right\Vert}
\newcommand{\abs}[1]{\left\vert#1\right\vert}
\newcommand{\set}[1]{\left\{#1\right\}}
\newcommand{\Real}{\mathbb{R}}
\newcommand{\RR}{\mathbb{R}^n}
\newcommand{\supp}{\operatorname{supp}}
\newcommand{\card}{\operatorname{card}}
\renewcommand{\L}{\mathcal{L}}
\renewcommand{\P}{\mathcal{P}}
\newcommand{\T}{\mathcal{T}}
\newcommand{\A}{\mathcal{A}}
\newcommand{\K}{\mathcal{K}}
\renewcommand{\SS}{\mathcal{S}}
\newcommand{\blue}[1]{\textcolor{blue}{#1}}
\newcommand{\red}[1]{\textcolor{red}{#1}}
\newcommand{\Id}{\operatorname{I}}

\title[Endpoint boundedness of singular integrals]
{Endpoint boundedness of singular integrals: CMO space associated to Schr\"odinger operators}

\author{ Xueting Han}
\address{Xueting Han, School of Mathematics and Physics, University of Science and Technology Beijing,
Beijing 100083, P.R. China  and
    School of Mathematical and Physical Sciences, Macquarie University, NSW 2109, Australia}
\email{hanxueting12@163.com}

 \author{Ji Li}
\address{Ji Li, School of Mathematical and Physical Sciences, Macquarie University, NSW 2109, Australia}
\email{ji.li@mq.edu.au}

\author{Liangchuan Wu}
\address{Liangchuan Wu,  School of Mathematical Science, Anhui University, Hefei, 230601, P.R.~China}
\email{wuliangchuan@ahu.edu.cn}

 \date{\today}
 \subjclass[2010]{42B20, 42B25, 42B35}
\keywords{maximal operator, Riesz transforms, CMO space,  Schr\"odinger operators.}

\begin{abstract}  
Let $ \mathcal{L} = -\Delta + V $ be a Schrödinger operator acting on $ L^2(\mathbb{R}^n) $, where the nonnegative potential \( V \) belongs to the reverse Hölder class $ RH_q $ for some $ q \geq  n/2 $. This article is primarily concerned with the study of endpoint boundedness for classical singular integral operators in the context of the space $ \mathrm{CMO}_{\mathcal{L}}(\mathbb{R}^n) $, consisting of functions of vanishing mean oscillation associated with $ \mathcal{L} $.

We establish the following main results: (i) the standard Hardy--Littlewood maximal operator is bounded on $\mathrm{CMO}_{\mathcal{L}}(\mathbb{R}^n) $; (ii) for each $ j = 1, \ldots, n$, the adjoint of the Riesz transform $ \partial_j \mathcal{L}^{-1/2} $ is bounded from $ C_0(\mathbb{R}^n) $ into $ \mathrm{CMO}_{\mathcal{L}}(\mathbb{R}^n) $; and (iii) the approximation to the identity generated by the Poisson and heat semigroups associated with $ \mathcal{L} $ characterizes $ \mathrm{CMO}_{\mathcal{L}}(\mathbb{R}^n) $ appropriately.

These results recover the classical analogues corresponding to the Laplacian as a special case. However, the presence of the potential $ V $ introduces substantial analytical challenges, necessitating tools beyond the scope of classical Calderón--Zygmund theory. Our approach leverages precise heat kernel estimates and the structural properties of $ \mathrm{CMO}_{\mathcal{L}}(\mathbb{R}^n) $ established by Song and the third author in \cite{SW}.

 \end{abstract}

\maketitle

 \tableofcontents

%%%%%%%%%%%%%%%%%%%%%%%%%%%%%%%%%%%%%%%%%%%%%%%%%%%%%%%%%%%%%%%%%%%%%%%%%%%%%%%%%%%%%

 %                                   INTRODUCTION

%%%%%%%%%%%%%%%%%%%%%%%%%%%%%%%%%%%%%%%%%%%%%%%%%%%%%%%%%%%%%%%%%%%%%%%%%%%%%%%%%%%%%

\section{Introduction and main results}
\setcounter{equation}{0}

Let us consider  the Schr\"odinger  operator
$$ 
    \mathcal{L}=-\Delta+V(x) \   \    { \rm on } \  \  L^{2}(\mathbb{R}^{n}), \quad n \geq 3,
$$ 
where the nonnegative potential $V$ is not identically zero, and $V\in RH_q$  for  some $q\geq n/2$,
which by definition means that   $V\in L^{q}_{\rm loc}(\mathbb{R}^n), V\geq 0$, and
there exists a  constant $C>0$ such that  the reverse H\"older inequality
\begin{equation}\label{eqn:Reverse-Holder}
  \left(\frac{1}{|B|} \int_{B} V(y)^{q} d y\right)^{1 / q} \leq \frac{C}{|B|} \int_{B} V(y)\, d y
\end{equation}
holds for all   balls $B$ in $\mathbb{R}^n$. Following \cite{DGMTZ},   a locally integrable function $f$ belongs to  ${\rm BMO}_{{\mathcal{L}}}({\mathbb R}^n)$ if 
\begin{equation}\label{eqn:def-BMOL-norm}
     \|f\|_{{\rm BMO}_{\L}(\mathbb{R}^n)}:=\sup_{B=B(x_B,r_B):\, r_B<\rho(x_B)}\frac{1}{|B|} \int_{B}\left|f(y)-f_{B}\right|   d y+\sup_{B=B(x_B,r_B):\, r_B\geq \rho(x_B)} \frac{1}{|B|} \int_{B}|f(y)|\, d y <\infty.
\end{equation}
The critical radii above are determined by the function $\rho(x; V)=\rho(x)$, which was first introduced by Shen \cite[Definition~1.3]{Shen1} and  takes the explicit form
\begin{equation}\label{eqn:critical-funct}
 \rho(x)=\sup \left\{r>0: \frac{1}{r^{n-2}} \int_{B(x, r)} V(y)\, d y \leq 1\right\}.
\end{equation}

This article focuses on ${\rm CMO}_{\L}(\mathbb R^n)$, the space of vanishing mean oscillation associated to $\L$, which is the closure of  $C_c^\infty(\mathbb R^n)$ (the space of smooth functions with compact support) in the ${\rm BMO}_{\L}(\mathbb R^n)$ norm. As a crucial  subspace of ${\rm BMO}_{\L}(\mathbb R^n)$, it satisfies the duality relations 
\begin{equation}\label{eqn:duality}  
   \left ({\rm CMO}_{\L}(\mathbb R^n)\right)^*=H_{\L}^1(\mathbb R^n)\quad \text{and} \quad (H_{\L}^1(\mathbb R^n))^*={\rm BMO}_{\L}(\mathbb R^n),
\end{equation}
where the Hardy-type space $H_{\L}^1(\mathbb{R}^n)$  is defined by
$$
H_{\mathcal{L}}^{1}\left(\mathbb{R}^{n}\right)=\left\{f \in L^{1}\left(\mathbb{R}^{n}\right): \, \mathcal{P}^{*} f(x)=\sup _{t>0}\left|e^{-t \sqrt{\mathcal{L}}} f(x)\right| \in L^{1}(\mathbb{R}^{n})\right\}
$$ 
with norm 
$\|f\|_{H_{\mathcal{L}}^{1}\left(\mathbb{R}^{n}\right)}=\big\|\,\mathcal{P}^{*} f\,\big\|_{L^{1}\left(\mathbb{R}^{n}\right)}$.
See \cite{DDSTY, DGMTZ, Ky} for details. Additional equivalent characterizations of ${\rm CMO}_{\L}(\mathbb R^n)$ via mean oscillation and   tent spaces, respectively, can be found in  \cite{SW} by L. Song and the third author.

The space ${\rm CMO}_{\L}(\mathbb R^n)$ shares key similarities with the classical vanishing mean oscillation space: when $V\equiv 0$, ${\rm CMO}_{\Delta}(\mathbb R^n)$ (resp. ${\rm BMO}_{\Delta}(\mathbb R^n)$) coincides exactly with the standard ${\rm CMO}(\mathbb R^n)$ (resp. ${\rm BMO}(\mathbb R^n)$), and the  dualities \eqref{eqn:duality}  reduce to their  classical counterparts. However, ${\rm CMO}_{\L}(\mathbb R^n)$ demonstrates certain properties distinct from the classical setting. For instance,  the convolution of  a compactly supported bump function with a function of  ${\rm CMO}_{\L}(\mathbb R^n)$ may fail to remain in 
 ${\rm CMO}_{\L}(\mathbb R^n)$; see \cite[Lemma 4.1]{SW}.

The aim of this paper is to study endpoint boundedness for classical singular integral operators in the context of the space $ \mathrm{CMO}_{\mathcal{L}}(\mathbb{R}^n)$.
Central to this pursuit are the boundedness of cornerstone operators such as the Hardy--Littlewood maximal operator and the Riesz transforms on this space. Additionally, the development of suitable approximations to the identity compatible with the structure of this space requires careful consideration.

{\bf Part I.} \ \ 
The (uncentered) Hardy--Littlewood maximal function $M$ on $\mathbb R^n$ is a well-known operator and plays a fundamental role in harmonic analysis.
However, its behaviour on the classical ${\rm CMO}(\mathbb R^n)$ remains unclarified.

Recall that for the classical case with $V\equiv 0$, it's known that for a function $f\in {\rm BMO}(\mathbb R^n)$, it may occur that $Mf\equiv +\infty$, and a typical example is $f(x)=\log |x|$ (in contrast, for any $f\in {\rm BMO}_{\L}(\mathbb R^n)$,  we have $Mf(x)<+\infty$ for a.e. $x\in \mathbb R^n$).  Nevertheless, there exists a constant $C$ depending only on $n$ such that for any $f\in {\rm BMO}(\mathbb R^n)$ for which $Mf$ is not identically equal to infinity, we have 
$$ 
   \|Mf\|_{{\rm BMO}(\mathbb R^n)}\leq C\|f\|_{{\rm BMO}(\mathbb R^n)};
$$ 
see \cite[Theorem 4.2]{BDS} by Bennett, DeVore and Sharpley. The further boundedness of $M$ and its  fractional counterpart
 on ${\rm VMO}$ were investigated in \cite{S} and \cite{GK}, respectively, where ${\rm VMO}$ is the BMO-closure of ${\rm UC}\cap {\rm BMO}$, and ${\rm UC}$ is the class of all uniformly continuous functions.  Alternatively,
$f\in {\rm VMO}(\mathbb R^n)$ if and only if $f\in {\rm BMO}(\mathbb R^n)$ and  
$$
 \lim _{a \rightarrow 0} \sup _{B: \,r_{B} \leq a} |B|^{-1} \int_{B}\left|f(x)-f_{B}\right| d x =0.
$$
Since the nonnegative potential $V$ is assumed  not to be  identically  zero, we have 
$$
     {\rm CMO}_{\L}(\mathbb R^n)\subsetneqq {\rm CMO}(\mathbb R^n)\subsetneqq  {\rm VMO}(\mathbb R^n).
$$

Our first result is  to characterize the Hardy--Littlewood maximal  operator $M$ on  $ {\rm CMO}_{\L}(\mathbb R^n)$, which also  clarifies the boundedness of $M$ on the classical $ {\rm CMO}(\mathbb R^n)$.

\begin{theorem}\label{thm:M-CMO}
Suppose $V\in RH_q$ for some $q\geq n/2$ and  let $\L=-\Delta+V$. 
For each $f\in {\rm BMO}_{\L}(\mathbb R^n)$, the Hardy--Littlewood maximal function $Mf$ belongs to ${\rm BMO}_{\L}(\mathbb R^n)$ as well, with 
\begin{equation}\label{eqn:M-BMO}
   \|Mf\|_{{\rm BMO}_{\L}(\mathbb R^n)}\leq C\|f\|_{{\rm BMO}_{\L}(\mathbb R^n)},
\end{equation}
where the constant $C>0$ is independent of $f$.

Moreover, if $f\in  {\rm CMO}_{\L}(\mathbb R^n)$, we also have $Mf\in {\rm CMO}_{\L}(\mathbb R^n)$.
\end{theorem}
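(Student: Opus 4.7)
The plan is to first establish the $\mathrm{BMO}_{\L}$-bound \eqref{eqn:M-BMO} by splitting into the two regimes dictated by the critical radius function $\rho$, and then upgrade to the $\mathrm{CMO}_{\L}$ statement via density.

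For a ball $B = B(x_B, r_B)$ with $r_B < \rho(x_B)$, I would adapt the classical Bennett--DeVore--Sharpley strategy. Writing $f = f_{2B} + (f - f_{2B})\chi_{2B} + (f - f_{2B})\chi_{(2B)^c}$ and using sublinearity of $M$, the additive constant $f_{2B}$ does not contribute to the oscillation. The local piece is controlled using the $L^2$-boundedness of $M$ together with a John--Nirenberg-type inequality for $\mathrm{BMO}_{\L}$, while the tail piece is handled by the standard estimate that $\bigl|M((f-f_{2B})\chi_{(2B)^c})(y) - M((f-f_{2B})\chi_{(2B)^c})(z)\bigr|$ is uniformly bounded by $\|f\|_{\mathrm{BMO}_{\L}}$ for $y,z \in B$. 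When $r_B \geq \rho(x_B)$, the object to bound is the average $\frac{1}{|B|}\int_B Mf$. Decompose $f = f\chi_{2B} + f\chi_{(2B)^c}$: for the local part, combine Cauchy--Schwarz, the $L^2$-boundedness of $M$, and a John--Nirenberg-type estimate on large balls to get $\bigl(\tfrac{1}{|B|}\int_{2B}|f|^2\bigr)^{1/2} \lesssim \|f\|_{\mathrm{BMO}_{\L}}$; for the nonlocal part, observe that any ball $B' \ni y \in B$ meeting $(2B)^c$ satisfies $r_{B'} \gtrsim r_B \geq \rho(x_B)$, so $B'$ itself lies in the large-ball regime and $\frac{1}{|B'|}\int_{B'}|f| \lesssim \|f\|_{\mathrm{BMO}_{\L}}$ follows directly from \eqref{eqn:def-BMOL-norm}, yielding a uniform pointwise bound on $M(f\chi_{(2B)^c})$ over $B$.

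For the $\mathrm{CMO}_{\L}$ statement, I would argue by density. Since $\mathrm{CMO}_{\L}$ is the $\mathrm{BMO}_{\L}$-closure of $C_c^\infty(\mathbb{R}^n)$, pick $\varphi_k \in C_c^\infty$ with $\varphi_k \to f$ in $\mathrm{BMO}_{\L}$. The pointwise sublinearity $|Mf - M\varphi_k| \leq M(f - \varphi_k)$ combined with \eqref{eqn:M-BMO} gives $M\varphi_k \to Mf$ in $\mathrm{BMO}_{\L}$, so it suffices to show $M\varphi \in \mathrm{CMO}_{\L}$ for every $\varphi \in C_c^\infty$. For such $\varphi$, $M\varphi$ is continuous, bounded, and decays like $|x|^{-n}$ at infinity. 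The characterization of $\mathrm{CMO}_{\L}$ via mean oscillation from \cite{SW} then reduces the task to verifying vanishing of the BMO-type oscillation on small balls (which follows from the uniform continuity of $M\varphi$) and vanishing of the averages on large balls, both as $r_B \to \infty$ and as $|x_B| \to \infty$ (which follows from the $|x|^{-n}$ decay of $M\varphi$).

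The main obstacle, I expect, will be the large-ball case of \eqref{eqn:M-BMO}: because $M$ is not bounded on $L^1$, one must carefully extract enough local integrability of $f$ from its $\mathrm{BMO}_{\L}$-norm via the large-ball clause in the definition and an appropriate John--Nirenberg-type inequality adapted to $\L$. The nonlocal tail control also hinges on the structural observation that any ball meeting $(2B)^c$ from inside a large ball $B$ is itself in the large-ball regime for $\mathrm{BMO}_{\L}$ — a feature specific to the Schr\"odinger setting and without analogue in the classical BMO theory.
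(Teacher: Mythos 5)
Your reduction of the $\mathrm{CMO}_{\L}$ statement to $\varphi\in C_c^\infty$ by density does not work, and this is the fatal gap. From the pointwise bound $|Mf-M\varphi_k|\le M(f-\varphi_k)$ you cannot conclude $\|Mf-M\varphi_k\|_{\mathrm{BMO}_{\L}}\lesssim\|M(f-\varphi_k)\|_{\mathrm{BMO}_{\L}}$: the mean--oscillation part of the $\mathrm{BMO}_{\L}$ norm is not monotone under pointwise domination of absolute values (if $|h|\le g$, the oscillation of $h$ on a small ball is in no way controlled by the oscillation of $g$ there, and the average of $g$ over a small ball is not controlled by $\|g\|_{\mathrm{BMO}_{\L}}$ either). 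Since $M$ is only sublinear, there is no way to convert $\mathrm{BMO}_{\L}$--closeness of $f$ and $\varphi_k$ into $\mathrm{BMO}_{\L}$--closeness of $Mf$ and $M\varphi_k$. This is precisely why the paper does not argue by density: it instead invokes the characterization (vi) of Theorem \ref{thm:CMO-dual-known} and verifies $\widetilde{\gamma}_1(Mf)=\widetilde{\gamma}_3(Mf)=\widetilde{\gamma}_5(Mf)=0$ \emph{directly} for every $f\in\mathrm{CMO}_{\L}$, via a quantitative two--scale refinement of Bennett--DeVore--Sharpley: splitting $M=\max\{M_1,M_2\}$ according to whether the competing cube has sidelength below or above $\kappa\,\ell(Q)$, one obtains
\begin{equation*}
\frac{1}{|Q|}\int_{Q}\bigl|Mf-(Mf)_Q\bigr|\,dx\;\le\;C\left[\kappa^{n/2}\Bigl(\frac{1}{|Q^*|}\int_{Q^*}|f-f_{Q^*}|^2\,dx\Bigr)^{1/2}+\frac{\log\kappa}{\kappa}\,\|f\|_{\mathrm{BMO}}\right],\qquad Q^*=(2\kappa+1)Q,
\end{equation*}
and then chooses $\kappa=\varepsilon^{-1/n}$; the condition $\widetilde{\gamma}_5(Mf)=0$ requires a further separate argument using the covering \eqref{eqn:joint} and Lemma \ref{lem:size-rho}. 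None of this is present in, or recoverable from, your outline.

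Two further points on the $\mathrm{BMO}_{\L}$ bound. First, your small--ball argument treats $M$ as if it were a linear Calder\'on--Zygmund operator: the constant $f_{2B}$ \emph{does} interact with the maximal function (one only has $|Mf-M(f-f_{2B})|\le|f_{2B}|$, which is unbounded), and the tail terms $M((f-f_{2B})\chi_{(2B)^c})(y)$ for $y\in B$ are comparable across $B$ only multiplicatively, with an additive discrepancy that grows like $\log(r_{B'}/r_B)\,\|f\|_{\mathrm{BMO}}$; this is exactly the difficulty the pigeonhole step in \cite{BDS} is designed to overcome. (You could simply cite \cite[Theorem 4.2]{BDS} for the small--ball oscillation, as the paper does, since $\mathrm{BMO}_{\L}\subset\mathrm{BMO}$.) Second, in the large--ball case your key observation that a competing ball $B'\ni y\in B$ meeting $(2B)^c$ is itself in the large--ball regime needs $r_{B'}\gtrsim\rho(x_{B'})$, not $r_{B'}\ge\rho(x_B)$; this does follow, but only after an application of Lemma \ref{lem:size-rho} (one gets $\rho(x_{B'})\lesssim\rho(x_B)^{1/(k_0+1)}r_{B'}^{k_0/(k_0+1)}\lesssim r_{B'}$ and then enlarges $B'$ by a fixed factor). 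With that repair your large--ball argument is a valid and somewhat more direct alternative to the paper's reduction to $r_B=\rho(x_B)$ via $\frac{1}{|Q|}\int_Q Mf\le c(\|f\|_{\mathrm{BMO}}+\inf_Q Mf)$, but the theorem as a whole is not proved without a correct treatment of the $\mathrm{CMO}_{\L}$ part.
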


To prove it, we will apply the characterization of ${\rm CMO}_{\L}(\mathbb R^n)$ in terms of the behaviour of mean osicllation given in \cite{SW} (see (vi) of Theorem \ref{thm:CMO-dual-known} below), and give a more refined modification of the argument for \cite[Theorem 4.2]{BDS}.  Note that ${\rm CMO}(\mathbb R^n)$ can  also be characterized via mean oscillation, which coincides with ${\rm CMO}_{\L}(\mathbb R^n)$ whenever taking $V\equiv 0$, hence 
our proof also reveals the behaviour of $M$ on the classical ${\rm CMO}(\mathbb R^n)$ (Remarkably,  functions $f\in {\rm CMO}(\mathbb R^n)$ for which $Mf$ is identically infinite must be ruled out, such as $f(x)=\ln \ln |x| \cdot \mathsf 1 _{\{|\cdot|\geq e\}}(x)$). See Remark \ref{rem:CMO-M} for details.

{\bf Part II.} \ \ Consider the $j$\,th Riesz transform $\displaystyle R_j=\frac{\partial}{\partial x_j} \L^{-1/2}$ associated to $\L$ on $\mathbb R^n$, $j=1,\ldots, n$.    Shen \cite{S}  established that when $V\in RH_q$ for $n/2\leq q<n$, then
$$
    \|R_j f\|_{L^p(\mathbb R^n)}\leq C_p \|f\|_{L^p(\mathbb R^n)}  \quad \text{for}\   \  1<p\leq p_0,
$$
where $\displaystyle \frac{1}{p_0}=\frac{1}{q}-\frac{1}{n}$. When $V\in RH_n$, $R_j$ is a Calder\'on--Zygmund operator for each $j$.
Hence it suffices to consider the case $V\in RH_q$ with $n/2
\leq q<n$.
Let $R_j(x,y)$ be the kernel of the Riesz transform $ R_j$. Then the adjoint of $R_j$ is given by 
 $$
    R_j^* g(x)=\lim_{\varepsilon\to 0}\int_{|y-x|>\varepsilon}  R_j(y,x)g(y) dy.
 $$
By duality,  the above boundedness of $R_j$ deduces that $R_j^*$ is bounded on $L^{p'}(\mathbb R^n)$ with $p_0'\leq p'<\infty$, where $\displaystyle \frac{1}{p}+\frac{1}{p'}=1$.  Moreover, $R_j^*$ is bounded from $L^\infty(\mathbb R^n)$ to ${\rm BMO}_{\L}(\mathbb R^n)$, which is useful to give a characterization of ${\rm BMO}_{\L}(\mathbb R^n)$ via $R_j^*$. Concretely, for each $f\in {\rm BMO}_{\L}(\mathbb R^n)$,  we can write
$$
   f=\phi_0+\sum_{j=1}^nR_j^* \phi_j,\quad \phi_j\in L^\infty(\mathbb R^n), \, 0\leq j\leq n.
$$
See \cite[Theorem 1.3]{WY} by the third author and L.X. Yan.

\smallskip
To continue this line, our second result is as follows. Let  $C_0(\mathbb{R}^n)$ be the space of all continuous functions on $\mathbb{R}^n$ which vanish at infinity.

\begin{theorem}\label{thm:Riesz-CMO}
Suppose $V\in RH_q$ for some $q\geq n/2$ and  let $\L=-\Delta+V$. 
The adjoint Riesz transform  $R_j^*$ associated to $\L$ is bounded from $C_0(\mathbb{R}^n )$ to ${\rm CMO}_{\L}(\mathbb{R}^n)$  for $j=1,\ldots, n$.
\end{theorem}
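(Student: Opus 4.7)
The strategy has two stages: a density reduction to $C_c^\infty$ test functions, followed by a direct verification of the mean-oscillation characterization of $\mathrm{CMO}_\L(\mathbb{R}^n)$ established by Song and Wu in \cite{SW}.

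\emph{Density reduction.} By \cite[Theorem~1.3]{WY}, the adjoint Riesz transform $R_j^*$ maps $L^\infty(\mathbb{R}^n)$ boundedly into $\mathrm{BMO}_\L(\mathbb{R}^n)$. Since $C_c^\infty(\mathbb{R}^n)$ is dense in $C_0(\mathbb{R}^n)$ in the uniform norm, and $\mathrm{CMO}_\L(\mathbb{R}^n)$ is by construction a closed subspace of $\mathrm{BMO}_\L(\mathbb{R}^n)$, it suffices to prove $R_j^*\phi \in \mathrm{CMO}_\L(\mathbb{R}^n)$ for every $\phi \in C_c^\infty(\mathbb{R}^n)$: any $f \in C_0(\mathbb{R}^n)$ is a sup-norm limit of some sequence $\phi_k \in C_c^\infty(\mathbb{R}^n)$, so $R_j^*\phi_k \to R_j^* f$ in $\mathrm{BMO}_\L$ by the $L^\infty \to \mathrm{BMO}_\L$ bound, and the limit lies in the closed subspace $\mathrm{CMO}_\L$.

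\emph{Verifying the three limits.} Fix $\phi \in C_c^\infty$ with $\supp \phi \subset B(0,R_0)$. By \cite{SW} (cf. Theorem~\ref{thm:CMO-dual-known}), the membership $R_j^*\phi \in \mathrm{CMO}_\L$ reduces to three vanishing-mean-oscillation conditions on balls $B = B(x_B,r_B)$: (a) as $r_B \to 0$ with $r_B < \rho(x_B)$; (b) as $r_B \to \infty$ with $r_B \geq \rho(x_B)$; and (c) as $|x_B| \to \infty$. The two ``far-field'' conditions (b) and (c) follow from the standard Schrödinger Riesz kernel size estimate
\[
|R_j(w,x)| \leq \frac{C_N}{|w-x|^n}\Bigl(1 + \frac{|w-x|}{\rho(x)}\Bigr)^{-N}, \qquad N \in \mathbb{N},
\]
combined with the compact support of $\phi$, which forces pointwise decay of $R_j^*\phi(x)$ as $|x| \to \infty$ and hence, after averaging, the desired vanishing. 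For the small-scale condition (a), split $\phi = \phi\chi_{2B} + \phi\chi_{(2B)^c}$. The far piece is controlled by the Hölder smoothness of $R_j(w,\cdot)$ in the second variable, producing a mean-oscillation bound of order $r_B^\delta$ for some $\delta \in (0,1)$. The near piece is handled by exploiting the $C^\infty$-smoothness of $\phi$: via a first-order Taylor expansion on $2B$ combined with the $L^p$-boundedness of $R_j^*$ (available in Shen's range from \cite{Shen1}), one extracts an extra factor of $r_B \|\nabla \phi\|_\infty$.

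\emph{Main obstacle.} The crux of the proof is the small-scale vanishing in (a) for balls $B$ whose center $x_B$ sits near $\supp \phi$. Because $R_j^*$ is not a standard Calderón--Zygmund operator when $V \in RH_q$ with $n/2 \leq q < n$, the near-part mean oscillation does not decay automatically, and must be coaxed from the interplay of $\phi$'s smoothness, the kernel's Hölder regularity, and the restriction $r_B < \rho(x_B)$ on the relevant balls. Ensuring that these estimates hold \emph{uniformly} over all admissible balls---in particular tracking the dependence on $\rho(x_B)$ and on the distance from $B$ to $\supp \phi$---is the principal technical burden.
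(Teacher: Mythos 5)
Your density reduction is exactly the paper's first step, but the second stage has a genuine gap in the small-scale condition (a), precisely at the point you yourself flag as the main obstacle. In your near-piece argument, the first-order Taylor expansion replaces $\phi\chi_{2B}$ by $\phi(x_B)\chi_{2B}+O(r_B\|\nabla\phi\|_\infty)\chi_{2B}$; the error term is indeed handled by $L^{p'}$-boundedness and contributes $O(r_B)$, but the leading term contributes $|\phi(x_B)|$ times the mean oscillation of $R_j^*[\chi_{2B}]$ over $B$, which is of order $1$ uniformly in $r_B$ (already for the Hilbert transform, $H\chi_{[-2r,2r]}$ oscillates by about $\tfrac{2}{\pi}\log 3$ on $[-r,r]$ for every $r>0$). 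So no factor of $r_B$ is extracted when $x_B$ lies in $\supp\phi$, and $\widetilde{\gamma}_1(R_j^*\phi)=0$ does not follow from this splitting. A second, more minor issue: the pointwise size bound $|R_j(w,x)|\lesssim |w-x|^{-n}(1+|w-x|/\rho(x))^{-N}$ that you invoke for the far-field conditions is not available when $V\in RH_q$ with $n/2\le q<n$; Shen's estimates for the kernel (a gradient of the fundamental solution) are only $L^{p_1}$ bounds on annuli, so the decay of $R_j^*\phi$ at infinity has to be routed through $\bigl[M(|\phi|^{p_1'})\bigr]^{1/p_1'}$ and Cotlar's inequality, as the paper does.

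The paper sidesteps the small-ball problem entirely: it proves $R_j^*(C_c^\infty(\mathbb{R}^n))\subseteq C_0(\mathbb{R}^n)$ --- boundedness and decay at infinity in Step I via the $L^{p_1}$ annulus estimates, and continuity in Steps II--III via H\"older continuity of $\nabla_y\Gamma(y,\cdot,\tau)$ obtained from the Morrey embedding and Shen's interior estimates --- and then invokes the characterization of ${\rm CMO}_{\L}(\mathbb{R}^n)$ as the ${\rm BMO}_{\L}$-closure of $C_0(\mathbb{R}^n)$ (item (iii) of Theorem \ref{thm:CMO-dual-known}). Continuity is what makes the small-ball mean oscillation vanish; it cannot be recovered from a near/far kernel splitting plus $L^p$ bounds alone. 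If you want to keep your mean-oscillation route, you would still have to establish (local uniform) continuity of $R_j^*\phi$, at which point you are carrying out the paper's proof.
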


When $V\equiv 0$, the boundedness above is known, based on the Fourier transform of the classical Riesz transform $\displaystyle  \frac{\partial}{\partial x_j} (-\Delta)^{-1/2}$ for  $j=1,\ldots, n$;  see   \cite[Lemma~1]{D} for details. For any generic potential $V\in RH_q $,  techniques from Fourier transform are not workable, and we will show Theorem~\ref{thm:Riesz-CMO} by exploiting estimates for the kernels of Riesz transforms  and applying preliminaries in \cite{Shen1}.

As a consequence, we will show (see Lemma \ref{lem:Riesz-CMO-2} below) a  Riesz-type representation that  
for every continuous linear functional $\ell$ on  ${\rm CMO}_{\L}(\mathbb{R}^n)$, there exists a uniquely finite Borel measure $\mu_0$ 
 such that  $\ell$ can be realized by 
$$
     \ell(g)=\int_{\mathbb{R}^n}   g(x)\, d\mu_0(x),\quad \forall\, g\in {\rm CMO}_{\L}(\mathbb{R}^n),
$$
where $\mu_0$ satifies that its Riesz transforms $R_j(d \mu_0)(x)=\int R_j (x,y)\,d\mu_0(y)$ associated to $\L$ for $j=1,2,\ldots, n$, are all finite Borel measures.

\smallskip

{\bf Part III.} Consider the approximation to the identity on ${\rm CMO}_{\L}(\mathbb R^n)$. As aforementioned, the standard approximation to the identity can not match ${\rm CMO}_{\L}(\mathbb R^n)$ well due to the potential $V$. 
 Even for a radial bump function $\phi$ satisfying
$$ 
   {\rm supp}\, \phi\subseteq B(0,1),\quad        0\leq \phi\leq 1  \quad {\rm and }\quad \int \phi(x)\, dx=1,
$$ 
 the convolution $A_tf=t^{-n}\phi(t^{-1}\cdot)*f $ for $f\in {\rm CMO}_{\L}(\mathbb R^n)$ may not belong to ${\rm CMO}_{\L}(\mathbb R^n)$, unless assuming additional conditions such as  $f\in C_c^\infty(\mathbb R^n)$. In this article, we consider   the approximation to the identity arising from semigroups associated to $\L$.

\begin{theorem}\label{thm:approx}
Suppose $V\in RH_q$ for some $q\geq n/2$ and  let $\L=-\Delta+V$.  For any $f\in {\rm CMO}_{\L}(\mathbb R^n)$, we have $e^{-t\sqrt{\L}}f\in {\rm CMO}_{\L}(\mathbb R^n)$ for each $t>0$, and 
\begin{equation}\label{eqn:approx-identity-CMO}
   \lim_{t\to 0} e^{-t\sqrt{\L}}f=f\quad {\rm in }\  \   {\rm BMO}_{\L}(\mathbb R^n).
\end{equation}
In particular, if $f\in C_c^\infty(\mathbb R^n)$, then  we also have $\displaystyle \lim_{t\to 0} e^{-t\sqrt{\L}}f(x)=f(x)$ uniformly for all $x\in \mathbb R^n$.
\end{theorem}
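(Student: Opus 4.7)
My strategy hinges on two facts: $\mathrm{CMO}_{\L}(\mathbb{R}^n)$ is by construction the $\mathrm{BMO}_{\L}$-closure of $C_c^\infty(\mathbb{R}^n)$, and the Poisson semigroup $e^{-t\sqrt{\L}}$ is uniformly bounded in $t$ on $\mathrm{BMO}_{\L}(\mathbb{R}^n)$ (which follows from its boundedness on $H^1_{\L}$, implicit in the definition of $H^1_\L$ via the maximal function $\mathcal{P}^*$, combined with the duality \eqref{eqn:duality}). With these in hand, I plan to establish both claims first on $C_c^\infty$ and then propagate to $\mathrm{CMO}_{\L}$ by a three-$\varepsilon$ argument.

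First I would prove the ``in particular'' statement: $\|e^{-t\sqrt{\L}}g - g\|_\infty \to 0$ as $t\to 0$ for $g \in C_c^\infty$. Fix $R_0$ so $\supp g \subset B(0,R_0)$. Using subordination and the Feynman--Kac domination of the heat kernel of $\L$, the Poisson kernel satisfies $p_t^{\L}(x,y) \leq C\,t(t^2+|x-y|^2)^{-(n+1)/2}$. On $\{|x|\leq 2R_0\}$, split $e^{-t\sqrt{\L}}g(x) - g(x)$ into a term involving $g(y)-g(x)$ (controlled by uniform continuity of $g$ for $|x-y|<\delta$ and by kernel tail decay for $|x-y|\geq\delta$) and a term $g(x)(e^{-t\sqrt{\L}}\mathbf{1}(x) - 1)$ that vanishes uniformly on compact sets by standard semigroup arguments. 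On $\{|x|>2R_0\}$ the kernel bound gives $|e^{-t\sqrt{\L}}g(x)| \leq C\,t\|g\|_\infty R_0^n/|x|^{n+1}$, uniformly small for small $t$. In particular $\|e^{-t\sqrt{\L}}g - g\|_{\mathrm{BMO}_{\L}} \leq 2\|e^{-t\sqrt{\L}}g - g\|_\infty \to 0$.

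Next I would verify $e^{-t\sqrt{\L}}g \in \mathrm{CMO}_{\L}$ for fixed $t>0$ and $g \in C_c^\infty$ via the mean-oscillation characterization (vi) of Theorem \ref{thm:CMO-dual-known}: vanishing oscillation as $r_B \to 0$ (using the gradient estimate $|\nabla e^{-t\sqrt{\L}}g| \leq C\|g\|_\infty/t$ from Poisson kernel smoothness), plus two tail-type conditions at spatial infinity (balls with $r_B \geq \rho(x_B)$ escaping to infinity, and sub-critical balls with $|x_B|\to\infty$), both obtained from the polynomial decay of $e^{-t\sqrt{\L}}g$. A density argument extends this to $f \in \mathrm{CMO}_{\L}$: pick $g_k \in C_c^\infty$ with $g_k \to f$ in $\mathrm{BMO}_{\L}$; then $e^{-t\sqrt{\L}}g_k \in \mathrm{CMO}_{\L}$ and $e^{-t\sqrt{\L}}g_k \to e^{-t\sqrt{\L}}f$ in $\mathrm{BMO}_{\L}$, so closedness of $\mathrm{CMO}_{\L}$ yields $e^{-t\sqrt{\L}}f \in \mathrm{CMO}_{\L}$. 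Finally \eqref{eqn:approx-identity-CMO} follows from
\[
\|e^{-t\sqrt{\L}}f - f\|_{\mathrm{BMO}_{\L}} \leq \|e^{-t\sqrt{\L}}(f-g)\|_{\mathrm{BMO}_{\L}} + \|e^{-t\sqrt{\L}}g - g\|_{\mathrm{BMO}_{\L}} + \|g-f\|_{\mathrm{BMO}_{\L}}
\]
with $g \in C_c^\infty$ chosen so $\|f-g\|_{\mathrm{BMO}_{\L}}<\varepsilon$: the outer terms are $\lesssim \varepsilon$ by uniform $\mathrm{BMO}_{\L}$-boundedness of $e^{-t\sqrt{\L}}$, and the middle term tends to zero by the uniform convergence established above.

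The main obstacle is verifying the spatial-infinity conditions in the preceding step for $e^{-t\sqrt{\L}}g$: even for $g \in C_c^\infty$, $e^{-t\sqrt{\L}}g$ has only polynomial tail decay, and the sub-critical radius $\rho(x_B)$ can be large when $V$ is small near $x_B$. Matching the Poisson tail against Shen's quantitative estimates on $\rho$ from \cite{Shen1} to show that the normalized oscillations vanish uniformly as $|x_B| \to \infty$ is the genuinely $V$-dependent part of the argument; everything else is essentially a reduction to ingredients already present in the prior sections of the paper.
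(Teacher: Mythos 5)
Your overall architecture coincides with the paper's: reduce \eqref{eqn:approx-identity-CMO} to $f\in C_c^\infty$ via density plus uniform $\mathrm{BMO}_{\L}$-boundedness of $e^{-t\sqrt{\L}}$, prove uniform convergence for compactly supported smooth data, and handle $\mathrm{CMO}_{\L}$-preservation as a separate lemma. Where you genuinely diverge is in how the two supporting facts are established. For the uniform boundedness and the $\mathrm{CMO}_{\L}$-invariance, the paper works directly with the tent-space characterization of \cite[Theorem B]{SW}: it shows $\mathfrak{C}(t\sqrt{\L}e^{-t\sqrt{\L}}e^{-s\sqrt{\L}}f)\lesssim \mathfrak{C}(t\sqrt{\L}e^{-t\sqrt{\L}}f)$ pointwise and then verifies $\eta_1=\eta_2=\eta_3=0$ for general $f\in\mathrm{CMO}_{\L}$. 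You instead get boundedness by dualizing the trivial estimate $\mathcal{P}^*(e^{-t\sqrt{\L}}h)\le\mathcal{P}^*h$ on $H^1_{\L}$, and get $\mathrm{CMO}_{\L}$-invariance by checking (vi) of Theorem \ref{thm:CMO-dual-known} for $e^{-t\sqrt{\L}}g$ with $g\in C_c^\infty$ and then passing to the closure. Both routes are legitimate; yours is shorter and avoids the Carleson computation, at the cost of a Fubini-type identification of the Banach adjoint of $e^{-t\sqrt{\L}}|_{H^1_{\L}}$ with the integral operator on $\mathrm{BMO}_{\L}$, while the paper's tent-space computation is self-contained and also yields the quantitative estimate \eqref{eqn:small} used elsewhere. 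Incidentally, the ``main obstacle'' you flag at the end is not one: since $|e^{-t\sqrt{\L}}g(x)|\lesssim_t \|g\|_{L^1}\,|x|^{-(n+1)}$ away from $\supp g$, the averages in $\widetilde{\gamma}_3$ and $\widetilde{\gamma}_5$ are dominated by $\sup_{|x|\ge a}|e^{-t\sqrt{\L}}g(x)|$ regardless of how $r_B$ compares with $\rho(x_B)$, so the spatial-infinity conditions are immediate (indeed $e^{-t\sqrt{\L}}g\in C_0(\mathbb R^n)\subset \mathrm{CMO}_{\L}(\mathbb R^n)$ by (iii)).

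Two points do need repair. First, the pointwise gradient bound $|\nabla e^{-t\sqrt{\L}}g|\le C\|g\|_\infty/t$ is not available in the stated generality $n/2\le q<n$; in that range only a H\"older estimate for the kernel, with exponent $\delta=2-n/q_1$, holds. This is harmless for $\widetilde{\gamma}_1$ (the oscillation over a small ball is $\lesssim (r_B/t)^{\delta}\|g\|_\infty$), but as written the step is wrong. Second, and more substantively, the mass-defect term $g(x)\bigl(e^{-t\sqrt{\L}}\mathbf 1(x)-1\bigr)$ does \emph{not} vanish by ``standard semigroup arguments'': the Schr\"odinger semigroup is not conservative, and the rate at which $e^{-t\sqrt{\L}}\mathbf 1\to 1$ is governed precisely by the potential through $\rho$. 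What makes it work is the Kato--Trotter comparison $0\le K_{e^{t\Delta}}(x,y)-K_{e^{-t\L}}(x,y)\lesssim \bigl(\sqrt{t}/(\sqrt{t}+\rho(x))\bigr)^{\delta}t^{-n/2}e^{-|x-y|^2/(ct)}$ (the paper's \eqref{eqn:compare-heat}, subordinated to \eqref{eqn:Poisson-compare}), which after integration in $y$ gives $1-e^{-t\sqrt{\L}}\mathbf 1(x)\lesssim (t/\rho(x))^{\delta}$, uniform on compact sets since $\rho$ is locally bounded below. This is exactly the genuinely $V$-dependent ingredient of the proof; you must invoke it explicitly, otherwise the uniform convergence claim for $C_c^\infty$ data is unsupported.
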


The analogous conclusion remains valid when replacing the Poisson semigroup $e^{-t\sqrt{\L}}$ by the heat semigroup $e^{-t\L}$.

To establish this, we first show that  for any $f\in {\rm BMO}_{\L}(\mathbb R^n)$ and $t>0$,  the function $e^{-t\sqrt{\L}}f$ also  belongs to $ {\rm BMO}_{\L}(\mathbb R^n)$, and a corresponding result holds in  ${\rm CMO}_{\L}(\mathbb R^n)$ (see Lemma  \ref{lem:Poisson-CMO}). Our main ingredient is the characterization of $ {\rm CMO}_{\L}(\mathbb R^n)$ via the theory of tent spaces established in \cite{SW}.  Consequently,
it suffices to verify \eqref{eqn:approx-identity-CMO}  for functions in $C_c^\infty(\mathbb R^n)$, and we can utilize the classical Poisson semigroup $e^{-t\sqrt{-\Delta}}$ to streamline the argument.

This paper is organized as follows. 
In Section \ref{Preliminaries} we introduce the necessary preliminaries in characterizations of ${\rm CMO}_{\L}(\mathbb R^n)$ and the auxiliary function  $\rho$.
In Section \ref{sec:HL} we provide the proof of Theorem  \ref{thm:M-CMO}. 
In Section \ref{sec:Riesz} we present the proof of Theorem \ref{thm:Riesz-CMO}. 
The argument for Theorem \ref{thm:approx} will be discussed in the last section.

\section{Preliminaries}\label{Preliminaries}
\setcounter{equation}{0}

We recall some preliminaries on ${\rm CMO}_{\L}(\mathbb R^n)$ and the auxiliary function $\rho$ defined in \eqref{eqn:critical-funct}.

A remarkable fact is the self-improvement property: if $V\in RH_q$ with $q>1$, then there exists $\varepsilon>0$ depending only on the constant $C$ in \eqref{eqn:Reverse-Holder} and the dimension $n$ such that $V\in RH_{q+\varepsilon}$. Consequently, the assumption ``$V\in RH_q$ for some $q\geq n/2$''    can be rewritten as ``$V\in RH_q$ for some $q> n/2$''. This fact is useful for dealing with some critical indices that appear in our article below.

\smallskip

Combining works in \cite{DDSTY, Ky, SW}, we have the following characterizations of ${\rm CMO}_{\L}(\mathbb{R}^n)$.

\begin{theorem}\label{thm:CMO-dual-known}
Suppose $V\in RH_q$ for some $q\geq n/2$ and  let $\L=-\Delta+V$. The following statements are equivalent.
\begin{itemize}
	\item[(i) ] $f$ is in  ${\rm CMO}_{\L}(\mathbb{R}^n)$.

		\smallskip
	
	\item[(ii) ]  $f$ is in the closure in the ${\rm BMO}_{\L}(\mathbb{R}^n)$ norm of $C_c^\infty(\mathbb{R}^n)$.

	\smallskip

	\item[(iii) ]  $f$ is in the closure in the ${\rm BMO}_{\L}(\mathbb{R}^n)$ norm of $C_0(\mathbb{R}^n)$. 
	
	\smallskip 
    \item[(iv) ]	$f$ is in the pre-dual space of the Hardy space $H_{\L}^1(\mathbb{R}^n)$.
    
    	\smallskip
	
    \item [(v) ] $f$ is in $\mathcal{B}_{\L}$, where $\mathcal{B}_{\L}$ is the subspace of ${\rm BMO}_{\L}(\mathbb{R}^n)$ satisfying $\widetilde{\gamma}_i(f)=0$ for $1\leq i\leq 5$, where
\begin{align*}
\widetilde{\gamma}_1(f) &=\lim _{a \rightarrow 0} \sup _{B: \,r_{B} \leq a}\left(|B|^{-1} \int_{B}\left|f(x)-f_{B}\right|^{2} d x\right)^{1 / 2} ;\\
\widetilde{\gamma}_2(f) &=\lim _{a \rightarrow \infty} \sup _{B: \,r_{B} \geq a}\left(|B|^{-1} \int_{B}\left|f(x) -f_B\right|^{2} d x\right)^{1 / 2} ;\\
\widetilde{\gamma}_3(f) &=  \lim _{a \rightarrow \infty} \sup _{B: \, B \subseteq (B(0, a))^c}\left(|B|^{-1} \int_{B}\left|f(x)-f_{B}\right|^{2} d x\right)^{1 / 2};\\
\widetilde{\gamma}_4(f) &=\lim _{a \rightarrow \infty} \sup _{B: \,r_{B} \geq \max\{a,\,\rho(x_B)\}}\left(|B|^{-1} \int_{B}\left|f(x) \right|^{2} d x\right)^{1 / 2} ;\\
\widetilde{\gamma}_5(f) &=  \lim _{a \rightarrow \infty} \sup _{\substack{B: \, B \subseteq (B(0, a))^{c}\\   r_B\geq \rho(x_B)}}\left(|B|^{-1} \int_{B}\left|f(x)\right|^{2} d x\right)^{1 / 2}.
\end{align*}
Here $x_B$ denotes the center of $B$,  and  the function $\rho$ is defined in \eqref{eqn:critical-funct}.

	\smallskip

\item [(vi)] $f$ is in ${\rm BMO}_{\L}(\mathbb{R}^n)$  and satisfies $\widetilde{\gamma}_1(f)=\widetilde{\gamma}_3(f)=\widetilde{\gamma}_5(f)=0$.

\end{itemize}
\end{theorem}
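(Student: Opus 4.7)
The plan is to establish the six conditions equivalent via a small cycle of implications, drawing on the cited works of Dziuba\'nski--Garrig\'os--Mart\'inez--Torrea--Zienkiewicz, Ky, and Song--Wu at the key steps. The equivalence (i) $\Leftrightarrow$ (ii) is tautological from the definition of ${\rm CMO}_{\L}(\mathbb{R}^n)$. The implication (ii) $\Rightarrow$ (iii) is immediate from $C_c^\infty \subset C_0$, and for (iii) $\Rightarrow$ (ii) I would approximate any $g \in C_0(\mathbb{R}^n)$ by smooth compactly supported functions through standard truncation and mollification: since $g$ is uniformly continuous and vanishes at infinity, both the truncation error $g - g\chi_R$ and the mollification error $(g\chi_R)*\phi_\varepsilon - g\chi_R$ can be made uniformly small, hence small in the ${\rm BMO}_{\L}$ norm.

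For (i) $\Leftrightarrow$ (iv), I would invoke the duality $(\mathrm{CMO}_{\L})^* = H_{\L}^1$ from \cite{DDSTY, Ky}. Given this, standard Banach space arguments identify the pre-dual of $H_{\L}^1$ inside ${\rm BMO}_{\L}$ with the closure of the test functions, since $C_c^\infty$ is a norming subspace of $H_{\L}^1$ under the ${\rm BMO}_{\L}$ pairing.

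The substantive content lies in (i) $\Leftrightarrow$ (vi). The forward direction (i) $\Rightarrow$ (vi) reduces to checking that $\widetilde{\gamma}_1, \widetilde{\gamma}_3, \widetilde{\gamma}_5$ vanish on $C_c^\infty(\mathbb{R}^n)$, which is elementary, and that each $\widetilde{\gamma}_i$ is ${\rm BMO}_{\L}$-continuous by a uniform estimate $\widetilde{\gamma}_i(f) \leq C\|f\|_{{\rm BMO}_{\L}}$ combined with the usual $\varepsilon/3$ argument. The converse (vi) $\Rightarrow$ (i) is the main technical task: given $f$ with $\widetilde{\gamma}_1 = \widetilde{\gamma}_3 = \widetilde{\gamma}_5 = 0$, I would construct an explicit family $f_{R,\varepsilon} := (f\chi_R)*\phi_\varepsilon$ with $\chi_R$ a smooth cutoff at scale $R$ and $\phi_\varepsilon$ a standard mollifier, and show $f_{R,\varepsilon} \to f$ in ${\rm BMO}_{\L}$ as $\varepsilon \to 0$ and then $R \to \infty$. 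Balls of small radius are handled by $\widetilde{\gamma}_1 = 0$ (the mollifier controls small-scale oscillation), small balls far from the origin by $\widetilde{\gamma}_3 = 0$, and the critical balls with $r_B \geq \rho(x_B)$ far from the origin (where the second supremum in \eqref{eqn:def-BMOL-norm} is active) by $\widetilde{\gamma}_5 = 0$. Intermediate-scale balls straddling the truncation boundary $|x| \sim R$ require a careful patching using the doubling structure of the critical function $\rho$.

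Finally, (v) $\Leftrightarrow$ (vi) amounts to proving that $\widetilde{\gamma}_2$ and $\widetilde{\gamma}_4$ are redundant once $\widetilde{\gamma}_1, \widetilde{\gamma}_3, \widetilde{\gamma}_5$ all vanish. The idea is to split a large ball $B$ according to whether $r_B$ exceeds $\rho(x_B)$, using the second summand of the ${\rm BMO}_{\L}$ norm to pass from oscillation to average and vice versa, combined with a covering lemma at the critical scale $\rho$. The principal obstacle throughout is the genuinely two-regime nature of the space ${\rm BMO}_{\L}$ enforced by $\rho$: the usual single-cutoff-plus-mollification scheme must be refined so that the approximating sequence respects both oscillation control at scales below $\rho(x_B)$ and pointwise-average control at scales above it, a subtlety absent in the classical $V \equiv 0$ setting.
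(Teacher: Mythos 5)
You should first note that the paper does not prove this theorem at all: it is stated as an assembly of known results, with the equivalences (i)--(iv) drawn from \cite{DDSTY, Ky} and (v)--(vi) from \cite{SW}, so there is no in-paper argument to compare against. Your outline is consistent with the route taken in those references, and the easy links ((i)$\Leftrightarrow$(ii), (ii)$\Leftrightarrow$(iii) via $\|h\|_{{\rm BMO}_{\L}}\leq 2\|h\|_{L^\infty}$, and (i)$\Rightarrow$(vi) via the subadditivity of the $\widetilde{\gamma}_i$ together with the John--Nirenberg inequality of \cite{DGMTZ} and the vanishing of $\widetilde{\gamma}_1,\widetilde{\gamma}_3,\widetilde{\gamma}_5$ on $C_c^\infty(\mathbb{R}^n)$) are correctly handled. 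Two places deserve flagging. First, your treatment of (i)$\Leftrightarrow$(iv) is not a proof: identifying \emph{the} predual of $H^1_{\L}(\mathbb{R}^n)$ with the closure of $C_c^\infty(\mathbb{R}^n)$ is precisely the content of \cite{DDSTY, Ky} (preduals are not unique in general, and the identification needs the atomic decomposition and a weak-$*$ density argument), so this step is invoked rather than established. Second, in (vi)$\Rightarrow$(i) the order of operations is essential and you should make it explicit: the paper itself records (Section 5, quoting \cite[Lemma 4.1]{SW}) that mollification alone destroys $\widetilde{\gamma}_5$ for a general ${\rm CMO}_{\L}$ function, so one must truncate first and only then mollify; you do truncate first, which is correct, but the genuinely hard estimate is then $\|f-f\chi_R\|_{{\rm BMO}_{\L}}\to 0$, which consumes $\widetilde{\gamma}_3$, $\widetilde{\gamma}_5$, the slow variation of $\rho$ (Lemma \ref{lem:size-rho}) and a covering at the critical scale of the type \eqref{eqn:joint} -- your sketch names this obstacle but does not carry it out. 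The redundancy of $\widetilde{\gamma}_2,\widetilde{\gamma}_4$ in (v)$\Leftrightarrow$(vi) is argued along the right lines (supercritical balls reduce to $\widetilde{\gamma}_5$ by covering; large subcritical balls force $\rho(x_B)$, hence $|x_B|$, to be large and reduce to $\widetilde{\gamma}_3$). In short: correct strategy, no false steps, but the two substantive equivalences remain at the level of the citations the paper itself relies on.
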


\smallskip

Next we review the slowly varying  property of the critical radii function $\rho(x)$.

\begin{lemma} \label{lem:size-rho}
{\rm ( \cite[Lemma~1.4]{Shen1}.)}\
Suppose $V\in {\rm RH}_q$ for some $q\geq  n/2$. There exist $c>1$ and $k_0\geq1$ such that for all $x,y\in\mathbb{R}^n$,
\begin{equation}\label{eqn:size-rho}
c^{-1}\left(1+\frac{\abs{x-y}}{\rho(x)}\right)^{-k_0} \rho(x)\leq\rho(y)\leq c
\left(1+\frac{\abs{x-y}}{\rho(x)}\right)^{\frac{k_0}{k_0+1}} \rho(x).
\end{equation}
In particular, $\rho(x)\approx \rho(y)$ when $y\in B(x, r)$ and $r\lesssim\rho(x)$.
\end{lemma}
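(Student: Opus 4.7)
The plan is to work with the auxiliary quantity $\psi(x, r) := r^{2-n}\int_{B(x,r)} V(y)\,dy$, so that by \eqref{eqn:critical-funct}, $\rho(x)$ is characterized (after observing continuity of $\psi(x,\cdot)$ in $r$, together with $\psi(x,r) \to 0$ as $r \to 0^+$ and $\psi(x,r) \to \infty$ as $r \to \infty$) by $\psi(x,\rho(x)) = 1$. The first step is to derive from the reverse Hölder condition a one-sided scaling estimate: for $0 < r \leq R$,
\[
  \psi(x, r) \leq C_1 (r/R)^{\alpha}\,\psi(x, R), \qquad \alpha := 2 - n/q > 0,
\]
where the strict positivity of $\alpha$ uses the self-improvement property $V \in RH_{q'}$ with $q' > n/2$ recalled at the start of Section~\ref{Preliminaries}. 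This is obtained by combining Hölder's inequality applied to $\int_{B(x,r)} V$ with the reverse Hölder inequality on $B(x,R)$, then rearranging powers of the radii. A complementary doubling-type upper bound $\psi(x, R) \leq C_2 (R/r)^N \psi(x, r)$ for $r \leq R$ also follows, since $V \in RH_q$ implies $V \in A_\infty$, hence the measure $V(y)\,dy$ is doubling with some polynomial exponent $N > 0$.

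Next, set $L := |x-y|/\rho(x)$. To prove the lower bound $\rho(y) \geq c^{-1}(1+L)^{-k_0}\rho(x)$, it suffices to exhibit $R_0 \geq c^{-1}(1+L)^{-k_0}\rho(x)$ with $\psi(y, R_0) \leq 1$, which forces $\rho(y) \geq R_0$. By the inclusion $B(y, R) \subseteq B(x, R+|x-y|)$,
\[
  \psi(y, R) \leq (1 + |x-y|/R)^{n-2}\,\psi(x, R+|x-y|),
\]
and bounding $\psi(x, R+|x-y|)$ by the appropriate scaling from the first step (according as $R+|x-y|$ is at most or greater than $\rho(x)$) allows one to absorb all powers of $(1+L)$ produced, provided $k_0$ is chosen large enough in terms of $n$, $\alpha$, and $N$. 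The upper bound $\rho(y) \leq c(1+L)^{k_0/(k_0+1)}\rho(x)$ is then obtained by a symmetry/algebraic argument: applying the just-proved lower bound with $x$ and $y$ interchanged gives $\rho(x) \geq c^{-1}(1+|x-y|/\rho(y))^{-k_0}\rho(y)$, and setting $s := \rho(y)/\rho(x)$ this rearranges to $s \leq c(1+L/s)^{k_0}$; examining the cases $s \geq \sqrt{L}$ and $s < \sqrt{L}$ separately yields $s \leq C(1+L)^{k_0/(k_0+1)}$, with the exponent $k_0/(k_0+1)$ emerging naturally from the algebra.

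The main technical obstacle is the bootstrap in the lower bound: one must carefully split into the subcases $R + |x-y| \leq \rho(x)$ versus $R + |x-y| > \rho(x)$, in the latter of which the doubling exponent $N$ intervenes, and choose $k_0$ to dominate all accumulated powers of $(1+L)$. The scaling estimate in the first step, which sharply uses $q > n/2$, is what powers the whole argument; without this strict inequality $\alpha$ would fail to be positive and the scheme would collapse. The final assertion that $\rho(x) \approx \rho(y)$ whenever $y \in B(x,r)$ with $r \lesssim \rho(x)$ is then an immediate consequence of \eqref{eqn:size-rho} evaluated in the regime $|x-y|/\rho(x) \lesssim 1$.
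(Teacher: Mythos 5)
The paper does not prove this lemma at all: it is quoted verbatim from Shen \cite[Lemma~1.4]{Shen1}, so your proposal is being measured against Shen's original argument. Your Step~1 (the scaling estimate $\psi(x,r)\le C_1(r/R)^{\alpha}\psi(x,R)$ with $\alpha=2-n/q>0$, plus the doubling bound) is correct and is exactly Shen's Lemma~1.2, and your derivation of the upper bound from the lower bound by symmetry is the right idea (though your case split $s\gtrless\sqrt{L}$ does not close: when $s\ge\sqrt L$ you only get $s\le c(1+\sqrt L)^{k_0}\lesssim(1+L)^{k_0/2}$, which is weaker than $(1+L)^{k_0/(k_0+1)}$; the split should be $s\ge L$ versus $s<L$, i.e.\ $L/s\le 1$ versus $L/s>1$, after which $s^{k_0+1}\le c2^{k_0}L^{k_0}$ gives the stated exponent).

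The genuine gap is in the lower bound, and it is one of order of operations. You apply the inclusion first, writing $\psi(y,R_0)\le(1+|x-y|/R_0)^{n-2}\,\psi(x,R_0+|x-y|)$ with $R_0=c(1+L)^{-k_0}\rho(x)$, and claim the powers of $(1+L)$ can be absorbed ``provided $k_0$ is chosen large enough.'' This is backwards. The first factor is of size $(1+L(1+L)^{k_0}/c)^{n-2}\gtrsim(1+L)^{(k_0+1)(n-2)}$, which \emph{grows} with $k_0$, while the second factor admits no compensating decay: $R_0+|x-y|\ge|x-y|$, so for $L\ge1$ you are in the doubling regime and the best available bound is $\psi(x,R_0+|x-y|)\le C(1+L)^{N}$, another growing factor. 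Even for $L\approx1$ the product is a constant strictly larger than $1$ that worsens as $k_0$ increases or $c$ decreases. The underlying obstruction is that $\psi(y,R)=R^{2-n}\int_{B(y,R)}V$ cannot be usefully controlled by $R^{2-n}\int_{B(x,R+|x-y|)}V$ when $R\ll|x-y|$, since the prefactor $R^{2-n}$ blows up while the integral stays bounded below by $\int_{B(x,|x-y|)}V$. The correct (Shen's) order is: first apply your scaling estimate \emph{centered at $y$}, $\psi(y,R_0)\le C(R_0/R_1)^{\alpha}\psi(y,R_1)$ with $R_1:=(1+L)\rho(x)\ge\max\{R_0,|x-y|\}$; then use the inclusion at the large radius $R_1$, where the loss $(1+|x-y|/R_1)^{n-2}\le 2^{n-2}$ is bounded; then apply doubling at $x$ to get $\psi(x,R_1+|x-y|)\le C(1+L)^{N}$. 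This yields $\psi(y,R_0)\le Cc^{\alpha}(1+L)^{N-\alpha(k_0+1)}$, and now choosing $k_0$ large (so that $\alpha(k_0+1)\ge N$) and $c$ small does force $\psi(y,R_0)\le1$, hence $\rho(y)\ge R_0$. With this reordering, and the corrected case split in the symmetry step, your outline becomes a complete proof.
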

Hence $0<\rho(x)<\infty$ for each $x\in \mathbb R^n$, and $\rho$ is locally bounded from above and below. This fact will be used frequently in our article.

\medskip

\section{Hardy-Littlewood maximal operator on ${\rm CMO}_{\L}(\mathbb R^n)$: proof of Theorem  \ref{thm:M-CMO}} 
\label{sec:HL}
\setcounter{equation}{0}

Let $M$ denote  the uncentered Hardy--Littlewood maximal function.  The aim of this section is to explore the boundedness of $M$ on ${\rm CMO}_{\L}(\mathbb R^n)$.

\begin{proof}[Proof of Theorem~\ref{thm:M-CMO}]
{\it Step I.}
We begin by showing that for any given $f\in {\rm BMO}_{\L}(\mathbb R^n)$, we have  $Mf< +\infty$ for a.e. $x\in \mathbb R^n$.

This fact has been proven in \cite{DGMTZ} by splitting the function $f$ into a local part and a nonlocal part.
Alternatively, here we present an alternative proof by directly applying the definition  \eqref{eqn:def-BMOL-norm} of the ${\rm BMO}_{\L}$ norm.

 Indeed,   for any $f\in {\rm BMO}_{\L}$, it follows from the definition of the ${\rm BMO}_{\L}$ norm that 
 \begin{align*}
   Mf(x)\leq C_n  \sup_{r>0} \frac{1}{|B(x,r)|}\int_{B(x,r)} |f(y)|dy \leq C_n   \|f\|_{{\rm BMO}_{\L}} \sup_{r>0}   \max\left\{ \Big(\frac{\rho(x)}{r}\Big)^n, 1 \right\},
\end{align*}
and the $ \sup_{r>0}$ can be improved to $  \sup_{r>\delta}$ for some $\delta>0$ due to  the Lebesgue differentiation theorem. Specifically, for any $\varepsilon>0$, there exists $\delta=\delta(\varepsilon, x)>0$ such that 
$$
      Mf(x)\leq    \max\left\{  |f(x)|+\varepsilon, C_n   \|f\|_{{\rm BMO}_{\L}} \sup_{r>\delta}   \max\left\{ \Big(\frac{\rho(x)}{r}\Big)^n, 1 \right\}  \right\},
$$

Since
$0<\rho(x)<\infty$ for each $x\in \mathbb R^n$, we obtain $Mf(x)<+\infty$, a.e. $x\in \mathbb R^n$.

\smallskip

{\it Step II.} Next, we prove that $M$ is bounded on ${\rm BMO}_{\L}(\mathbb R^n)$.

Due to ${\rm BMO}_{\L}\subset {\rm BMO}$ and $\|f\|_{{\rm BMO}}\leq 2\|f\|_{{\rm BMO}_{\L}}$ for any $f\in {\rm BMO}_{\L}$, it follows from the boundedness of $M$ on the classical ${\rm BMO}$ space (see \cite[Theorem 4.2]{BDS}) that 
 $$
     \sup_{B=B(x_B,r_B):\, r_B<\rho(x_B)}\frac{1}{|B|} \int_{B}\left|Mf(y)-(Mf)_{B}\right|   d y\leq \|Mf\|_{{\rm BMO}}\leq C\|f\|_{{\rm BMO}}\leq C\|f\|_{{\rm BMO}_{\L}},
 $$
 and it suffices to show that 
\begin{equation}\label{eqn:aim1}
    \sup_{B=B(x_B,r_B):\, r_B\geq \rho(x_B)} \frac{1}{|B|} \int_{B}|Mf(y)|\, d y \leq C \|f\|_{{\rm BMO}_{\L}}.
\end{equation}
 
Recall that for every cube $Q$,
$$
    \frac{1}{|Q|}\int_Q Mf(x)dx\leq c\left( \|f\|_{{\rm BMO}}+ \inf_{x\in Q} Mf(x)\right),
$$
as shown in \cite[page 610]{BDS}. Therefore, for any given ball $B=B(x_B,r_B)$ with $r_B\geq \rho(x_B)$, there eixsts a cube $Q$ whose sigdelength is  $2r_B$ and $B\subseteq Q$, so
$$
     \frac{1}{|B|} \int_{B}|Mf(y)|\, d y\leq  c\left( \|f\|_{{\rm BMO}}+ \inf_{x\in B} Mf(x)\right).
$$
This implies that it suffices to consider \eqref{eqn:aim1} for balls $B$ with $ r_B= \rho(x_B)$. Observe that for each $x\in B(x_B,\rho(x_B))$,
\begin{align*}
    Mf(x)&=\max\left\{\sup_{B'\ni x,\, r_{B'}\leq \rho(x)}  \frac{1}{B'}  \int_{B'} |f(y)|d y,\,     \sup_{B'\ni x,\, r_{B'}>\rho(x)}  \frac{1}{B'}  \int_{B'} |f(y)|d y \right\} \\
    &=:\max\left\{ F_1 (x), \, F_2(x)\right\}.
\end{align*}

If $Mf(x_0)=F_2(x_0)$ for some $ x_0\in B(x_B,\rho(x_B))$, then 
$$
  \inf_{x\in B} Mf(x)\leq F_2 (x_0)\leq \|f\|_{{\rm BMO}_{\L}}.
$$

Otherwise, $Mf(x)=F_1(x)$ for each $ x\in B(x_B,\rho(x_B))$. Note that $\rho(x)\approx \rho(x_B)$ for each $x\in B(x_B,\rho(x_B))$ by Lemma \ref{lem:size-rho}, then $Mf(x)=M(f\mathsf 1_{\mathsf k B(x_B,\,\rho(x_B)})$ for some constant $\mathsf k>1$ independent of $B(x_B,\,\rho(x_B))$.
Thus by the $L^2$ boundedness of the operator $M$,
\begin{align*}
   \frac{1}{|B(x_B, \rho(x_B))|} \int_{B}|Mf(y)|\, d y &\leq \bigg( \frac{1}{|B(x_B, \rho(x_B))|} \int_{B}|Mf(y)|^2 dy\bigg)^{1/2}\\
   &\leq C \bigg(  \frac{1}{|B(x_B, \rho(x_B))|} \int_{\mathbb R^n}  \left|f(y)\mathsf 1_{\mathsf k B(x_B,\rho(x_B))}(y)\right|^2 dy\bigg)^{1/2} \\
   &  \leq C \|f\|_{{\rm BMO}_{\L}},
\end{align*}
where the last inequality above follows from the John--Nirenberg type inequality associated to ${\rm BMO}_{\L}$ (see  \cite[Corollary 3]{DGMTZ} for example). Hence, we complete the proof of \eqref{eqn:M-BMO}.

\smallskip

{\it Step III.} It remains to show that $Mf$ belongs to ${\rm CMO}_{\L}(\mathbb R^n)$ whenever $f\in {\rm CMO}_{\L}(\mathbb R^n)$.

For any given $f\in {\rm CMO}_{\L}(\mathbb R^n)$, it follows from Theorem \ref{thm:CMO-dual-known} that it's equivalent to  $f\in {\rm BMO}_{\L}(\mathbb R^n)$ and $\widetilde{\gamma}_1(f)=\widetilde{\gamma}_3(f)=\widetilde{\gamma}_5(f)=0$. By {\it Step II}, we have $Mf\in {\rm BMO}_{\L}(\mathbb R^n)$, hence
it suffices to verify that $\widetilde{\gamma}_1(Mf)=\widetilde{\gamma}_3(Mf)=\widetilde{\gamma}_5(Mf)=0$. 

The following argument refines and modifies the approach in \cite[Theorem 4.2]{BDS}; see also \cite{S}. 

Note that for any $B$,
\begin{align}\label{eqn:osc}
   \frac{1}{|B|} \int_B \left|f(x)-f_B\right|^2  dx& \leq \frac{1}{|B|^2 }\iint_{B\times B} \left|f(x)-f(y)\right|^2 dydx \nonumber\\
   &= \frac{1}{|B|^2 }\iint_{B\times B} \left|f(x)-f_B +  f_B- f(y)\right|^2 dydx \nonumber\\
   &\leq  \frac{2}{|B|} \int_B \left|f(x)-f_B\right|^2  dx,
\end{align}
and 
$$
     \frac{1}{|B|^2 }\iint_{B\times B} \big ||f(x)|-|f(y)|\big |^2 dydx \leq \frac{1}{|B|^2 }\iint_{B\times B} \left|f(x)-f(y)\right|^2 dydx,
$$
from which it follows readily that 
 $|f|\in {\rm CMO}_{\L}(\mathbb R^n)$.  Besides, $M|f|=Mf$. Thus we may assume without loss of generality that $f$ is nonnegative.

Now, for any  $0\leq f\in {\rm CMO}_{\L}(\mathbb R^n)$, note that  $Mf$ and  $\widetilde{\gamma}_i(Mf)$ for $i=1,3,5$ can be defined using cubes with sides parallel to the coordinate axes instead of balls.
 In the following proof, ``cube'' always refers to such cubes. For any given cube $Q$,  denote its sidelength by $\ell(Q)$. 
 Let $\kappa>0$ be a constant to be chosen later. For each $x\in Q$, define
$$
    M_1f(x):=\sup_{Q' \ni  x:\, \ell(Q')< \kappa \ell(Q)} f_{Q'}\quad 
    \text{and} \quad M_2f(x):=\sup_{Q' \ni  x:\, \ell(Q')\geq  \kappa \ell(Q)} f_{Q'}
$$
Clearly, $Mf=\max\big\{ M_1 f, M_2 f\big\}$ on $Q$. Set
$$
   \Omega=\left\{x\in Q:\, Mf(x)>(Mf)_Q\right\},\quad \Omega_1=\left\{x\in \Omega:\, M_1f(x)\geq M_2f(x)\right\}
$$
and  $ \Omega_2=\Omega\setminus \Omega_1$. Then
\begin{align}\label{eqn:M-decomp}
   \frac{1}{|Q|}\int_{Q} \left|   Mf(x)-(Mf)_Q  \right| dx &=\frac{2}{|Q|}\int_{\Omega}  \Big(  Mf(x)-(Mf)_Q \Big ) \,dx\nonumber\\
   &=2 \sum_{i=1}^2  \frac{1}{|Q|} \int_{\Omega_i }  \Big (   M_if(x)-(Mf)_Q  \Big ) \, dx.
\end{align}

We begin by considering the term involving $M_1$ in \eqref{eqn:M-decomp}.  For the above cube $Q$, let  $Q^*= (2\kappa+1)Q $ denote the cube with the same center as $Q$ and sidelength $(2\kappa+1)\ell(Q)$. Then 
$M_1f(x)=M_1(f\mathsf 1_{Q^*})(x)$ for any $x\in Q$, and  
\begin{align}\label{eqn:M1}
\frac{1}{|Q|}\int_{\Omega_1 }  \Big (   M_1f(x)-(M f)_Q  \Big ) \, dx&\leq 
	\frac{1}{|Q|}\int_{\Omega_1 }  \Big (   M_1f(x)-(M_1 f)_Q  \Big ) \, dx\nonumber\\
	&\leq  \frac{1}{|Q|}\int_{Q} \left| M_1f(x)- f_{Q^*}  + f_{Q^*}- (M_1f)_Q \right|dx  \nonumber\\
	&\leq \frac{2}{|Q|}\int_{Q} \left| M_1f(x)- f_{Q^*}   \right| dx\nonumber\\
	&\leq 2\,\bigg(\frac{1}{|Q|}\int_{Q}\left| M_1 \left( f\mathsf 1_{Q^*}- f_{Q^*} \right)(x)\right|^2 dx\bigg)^{1/2}  \nonumber\\
	&\leq 2\,\bigg(\frac{1}{|Q|}\int_{Q}\left| M \left[ \left(f- f_{Q^*}\right) \mathsf 1_{Q^*} \right](x)\right|^2 dx\bigg)^{1/2} \nonumber\\
	&\leq C \kappa^{n/2} \bigg(\frac{1}{|Q^*|}\int_{Q^*} \left|f(x)-f_{Q^*}\right|^2 dx\bigg)^{1/2}.
\end{align}

It remains to consider the other term (i.e., $i=2$) on the right hand of \eqref{eqn:M-decomp}.  For any fixed $x\in \Omega_2$, we have $Mf(x)=M_2f(x)> (Mf)_Q$. Let $Q'$ be any cube containing $x$ with $\ell(Q')\geq \kappa \ell(Q)$. Let $Q''$ be a cube with $\ell(Q'')= \ell(Q)+\ell(Q')$ which contains both $Q$ and $Q'$ and shares some common faces (i.e., they have a common vertex); see Figure \ref{fig:1}. 

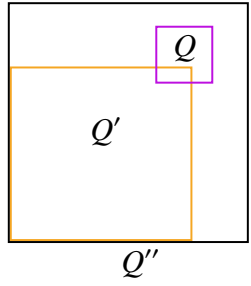
\begin{figure}

\tikzset{every picture/.style={line width=0.75pt}}  
\begin{tikzpicture}[x=0.75pt,y=0.75pt,yscale=-1,xscale=1]
 
\draw  [color={rgb, 255:red, 245; green, 166; blue, 35 }  ,draw opacity=1 ] (71.11,81.78) -- (162.22,81.78) -- (162.22,168.89) -- (71.11,168.89) -- cycle ;
 \draw  [color={rgb, 255:red, 189; green, 16; blue, 224 }  ,draw opacity=1 ] (144.5,61.21) -- (172.74,61.21) -- (172.74,89.44) -- (144.5,89.44) -- cycle ;
 
\draw   (70.11,49.33) -- (190.67,49.33) -- (190.67,169.89) -- (70.11,169.89) -- cycle ;

 \draw (172.22,72.11) node   [align=left] {\begin{minipage}[lt]{29.16pt}\setlength\topsep{0pt}
$\displaystyle Q$
\end{minipage}};
 
\draw (109.56,105.11) node [anchor=north west][inner sep=0.75pt]   [align=left] {$\displaystyle Q'$};
 
\draw (125.11,171.67) node [anchor=north west][inner sep=0.75pt]   [align=left] {$\displaystyle Q''$};

\end{tikzpicture}
\caption{$Q''$ contains $Q$ and $Q'$, and shares  a common vertex with $Q'$.}
  \label{fig:1}
\end{figure}

Then $Mf(y)\geq f_{Q''}$ for any $y\in Q$, so $f_{Q''}\leq (Mf)_{Q}$. Without loss of generality, one may assume that $\ell(Q')/\ell(Q)\in \mathbb N_+$,  thus  $Q^{\prime \prime} \backslash Q^{\prime}$ can be partitioned into $\frac{|Q''|-|Q'|}{|Q|}=(1+\frac{\ell(Q')}{\ell(Q)})^n-(\frac{\ell(Q')}{\ell(Q)})^n$ mutually disjoint cubes of side length $\ell(Q)$. Hence, by the pigeonhole principle,
there exists a cube $P\subset Q''\setminus Q$ with $\ell(P)=\ell(Q)$ such that $f_P\leq f_{Q''\setminus Q'}$.
As a consequence and by the nonnegative assumption of $f$,
\begin{align*}
   f_{Q'}-(Mf)_{Q}&\leq f_{Q'}-f_{Q''}\\
   &=f_{Q'}-\frac{|Q'|}{|Q''|} f_{Q'} -\frac{1}{|Q''|}\int_{Q''\setminus {Q'}} f(y)dy\\
   &\leq \frac{|Q''|-|Q'|}{|Q''|} \left[f_{Q'}-f_{Q''\setminus {Q'}}\right]\\
   &  \leq \frac{|Q''|-|Q'|}{|Q''|} \left[f_{Q'}-f_{P}\right]\\
   &\lesssim \frac{(\ell(Q')+\ell(Q))^n-\ell(Q')^n}{(\ell(Q')+\ell(Q))^n} \log\left(\frac{\ell(Q')}{\ell(Q)}\right) \|f\|_{{\rm BMO}}\\
   &\lesssim \frac{\log \kappa}{\kappa } \|f\|_{{\rm BMO}}.
\end{align*}
Taking the supremum over all such cubes $Q'$, we obtain 
$$
    M_2f(x)-(Mf)_Q\leq C \frac{\log \kappa}{\kappa } \|f\|_{{\rm BMO}}\quad \text{for}\   \  x\in \Omega_2.
$$
Consequently,
$$
   \frac{1}{|Q|}\int_{\Omega_2 }  \Big (   M_2f(x)-(M f)_Q  \Big ) \, dx\leq C \frac{\log \kappa}{\kappa } \|f\|_{{\rm BMO}}.
$$

Therefore, for any $\kappa>1$,
\begin{equation}\label{eqn:aim11}
\frac{1}{|Q|}\int_{Q} \left|   Mf(x)-(Mf)_Q  \right| dx \leq C\left[\kappa^{n/2} \bigg(\frac{1}{|Q^*|}\int_{Q^*} \left|f(x)-f_{Q^*}\right|^2 dx\bigg)^{1/2}  + \frac{\log \kappa}{\kappa } \|f\|_{{\rm BMO}} \right].
\end{equation}

Now we verify $\widetilde{\gamma}_i(Mf)=0$ for $i=1,3,5$. For any given  $\varepsilon>0$,  since $f\in {\rm CMO}_{\L}(\mathbb R^n)$ satisfies   $\widetilde{\gamma}_i(f)=0$ for $i=1,3,5$,  
there exist two integers $I_\varepsilon >>1$ and $J_\varepsilon>>1$ such that

 \begin{subequations}
\begin{equation}\label{eqn:CMO-a}
   \sup_{P:\, \ell(P)\leq 2^{-I_\varepsilon}} \left(\frac{1}{|P|}\int_P |f(x)-f_P|^2 dx \right)^{1/2}< \varepsilon,
\end{equation}
\begin{equation}\label{eqn:CMO-b}
    \sup_{P:\, P\subseteq (Q(0, 2^{J_\varepsilon }))^c} \left(\frac{1}{|P|}\int_P |f(x)-f_P|^2 dx\right)^{1/2}< \varepsilon,
\end{equation}
\begin{equation}\label{eqn:CMO-c}
    \sup_{ P\subseteq (Q(0, 2^{J_\varepsilon}))^c,\, \ell(P)\geq \rho(c_P)} \left(\frac{1}{|P|}\int_P |f(x) |^2 dx\right)^{1/2}< \varepsilon,
\end{equation}
 \end{subequations}
where $c_P$ dentoes the center of  the cube $P$, and $Q(0, 2^{J_\varepsilon })$ denotes the cube with $c_Q=0$ and $\ell(Q)=2^{J_\varepsilon }$.  

Hence,  taking $\displaystyle \kappa= \varepsilon^{-1/n}$, 
\begin{itemize}
\item one may combine \eqref{eqn:aim11} and \eqref{eqn:CMO-a} to see when  $(2\kappa+1)\ell(Q)\leq 2^{-I_\varepsilon}$, we have 
\begin{equation}\label{eqn:aim12}
    \frac{1}{|Q|}\int_{Q} \left|   Mf(x)-(Mf)_Q  \right| dx\leq C \left[\varepsilon^{1/2}+ \|f\|_{{\rm BMO}} \,\varepsilon^{1/(2n)}  \right],
\end{equation}
so
$$
 \lim _{a \rightarrow 0} \sup _{Q: \,\ell_{Q} \leq a}\left(|Q|^{-1} \int_{Q}\left|Mf(x)-(Mf)_{Q}\right|  d x\right) =0.
$$
Applying a John--Nirenberg type inequality associated to small cubes with $(2\kappa+1)\ell(Q)\leq 2^{-I_\varepsilon}$, we conclude that 
$\widetilde{\gamma}_1(Mf)=0$. This approach can be used to verify $\widetilde{\gamma}_3(Mf)=\widetilde{\gamma}_5(Mf)=0$, that is, the $L^2$ integrals involved therein can be  replaced by the corresponding $L^1$ integrals.

\smallskip

\item Similarly, one may combine \eqref{eqn:aim11} and \eqref{eqn:CMO-b} to see when $Q^*= (2\kappa+1)Q\subseteq (Q(0, 2^{J_\varepsilon }))^c$, \eqref{eqn:aim12} still holds. Hence, $\widetilde{\gamma}_3(Mf)=0$.

\end{itemize}

\smallskip

It remains to show $\widetilde{\gamma}_5(Mf)=0$. Note that for any cube $Q$ with $\ell(Q)\geq \rho(c_Q)$,  there exists a constant $C >0$ independent of $Q$ such that there exists a sequence $\left\{Q(x_k,\rho(x_k))\right\}_{k}$ such that 
\begin{equation}\label{eqn:joint}
	 Q\subseteq \bigcup_k Q(x_k,\rho(x_k)) \quad \text{and}\quad \sum_k |Q(x_k,\rho(x_k))|\leq C|Q|.
\end{equation}
Hence, to verify $\widetilde{\gamma}_5(Mf)=0$ for any given $0\leq f\in {\rm CMO}_{\L}(\mathbb R^n)$, it suffices to show
\begin{equation}\label{eqn:gamma5}
   \lim _{a \rightarrow \infty} \sup _{\substack{Q: \, Q \subseteq (Q(0, a))^{c}\\   \ell(Q)= \rho(c_Q)}} \frac{1}{|Q|} \int_{Q}M f(x) d x =0.
\end{equation}

For any given $Q$ with $ \ell(Q)= \rho(c_Q)$ and for any $x\in Q$, define
$$
    M'_1f(x):=\sup_{Q' \ni  x:\, \ell(Q')<   \rho(c_Q)} f_{Q'}\quad 
    \text{and} \quad M'_2f(x):=\sup_{Q' \ni  x:\, \ell(Q')\geq   \rho(c_Q)} f_{Q'}.
$$
Clearly, $Mf=\max\left\{ M'_1 f, M'_2 f\right\}$ on $Q$, and so
$$
   \frac{1}{|Q|}\int_Q Mf(x) dx\leq \sum_{i=1}^2 \frac{1}{|Q|}\int_Q M'_i f(x)dx.
$$

Note that  
\begin{align}\label{eqn:M1-2}
   \frac{1}{|Q|}\int_Q M'_1f(x)dx &= \frac{1}{|Q|}\int_Q M'_1(f\mathsf 1_{3Q})(x)dx\nonumber\\
   &= \Bigg(\frac{1}{|Q|}\int_Q \big|M'(f\mathsf 1_{3Q})(x)\big|^2 dx\Bigg)^{1/2}\nonumber\\
   &\leq C\, \Bigg(\frac{1}{|Q|}\int_{\mathbb R^n} \big| f\mathsf 1_{3Q}(x)\big|^2 dx\Bigg)^{1/2} \nonumber\\
   &\leq C\, \Bigg(\frac{1}{|3Q|}\int_{3Q} \big| f(x)\big|^2 dx\Bigg)^{1/2}.
\end{align}

On the other hand, there exists a constant $C>0$ such that for any cube  $Q'\ni x$ with $\ell(Q')\geq   \rho(c_Q)$, $f_{Q'}\leq C f_{Q''}\leq CM'_2f(c_Q)$, where $Q''$ is the smallest cube containg $Q$ and $Q'$. Hence, 
$$
     \frac{1}{|Q|}\int_Q M'_2f(x)dx\leq C M'_2f(c_Q).
$$

For any given $\varepsilon>0$, let $J_{\varepsilon}$ be the positive integer as in \eqref{eqn:CMO-c}. For any $Q(y, \rho(y))\cap Q(0,2^{J_{\varepsilon}})\neq \emptyset$, it follows from Lemma \ref{lem:size-rho} that  $\rho(y)\approx \rho(z)$ for any $z\in Q(y, \rho(y))\cap Q(0,2^{J_{\varepsilon}})$, thus $Q(y, \rho(y))\subset Q(z, C \rho(z))$ for some   $C>1$ independent of $y$ and $z$. Without loss of generality, we may assume that 
\begin{equation}\label{eqn:rho-0}
	\rho(0)\leq 2^{J_{\varepsilon}},
\end{equation} 
which can be achieved by taking  $J_{\varepsilon}$ sufficiently large.
Then by Lemma \ref{lem:size-rho} again,
$$
    \sup_{x\in Q(0,2^{J_{\varepsilon}})} \rho(x)\leq c \rho(x_0)^{\frac{1}{k_0+1}} 2^{J_{\varepsilon}\cdot \frac{k_0}{k_0+1}}\leq c 2^{J_\varepsilon}
$$
for some   $c>1$. Hence,
\begin{equation}\label{eqn:inclusion}
	  \bigcup_{Q(y, \rho(y)):\, Q(y, \rho(y))\cap Q(0,2^{J_{\varepsilon}})\neq \emptyset}  Q(y,\rho(y))  \subseteq Q(0,C'2^{J_{\varepsilon}})
\end{equation}
for some $C'>1$.

Note that  $M_2'$ is bounded by its corresponding  centered maximal counterpart, thus when $c_Q$ is far away from the origin,
\begin{align*}
  M'_2f(c_Q) &\leq C\sup_{Q'=Q'(c_Q, \ell(Q')) :\, \ell(Q')\geq   \rho(c_Q)} f_{Q'}\\
  &=C\max\left \{ \sup_{Q'(c_Q, \ell(Q'))\subseteq( Q(0,2^{J_{\varepsilon}} ))^c   :\, \ell(Q')\geq   \rho(c_Q)} f_{Q'},  \   \sup_{Q'(c_Q, \ell(Q'))\cap Q(0,2^{J_{\varepsilon}} )   \neq \emptyset :\, \ell(Q')\geq   \rho(c_Q)} f_{Q'}  \right \}.
\end{align*}

For any $Q'(c_Q, \ell(Q'))$ with    $\ell(Q')\geq \rho(c_Q)  $,  if $Q'(c_Q, \ell(Q'))\subseteq( Q(0,2^{J_{\varepsilon}} ))^c$, it follows from \eqref{eqn:CMO-c} to see $f_{Q'}<\varepsilon$, as desired.

On the other hand,  $Q'(c_Q, \ell(Q'))\cap Q(0,2^{J_{\varepsilon}} )   \neq \emptyset$ and $\ell(Q')\geq \rho(c_Q)$. Similar to \eqref{eqn:joint},  there exists a sequence $\left\{Q(x_k,\rho(x_k))\right\}_{k}$ such that 
$$
	 Q'(c_Q, \ell(Q'))\subseteq \bigcup_k Q(x_k,\rho(x_k)) \quad \text{and}\quad \sum_k |Q(x_k,\rho(x_k))|\leq C|Q'(c_Q, \ell(Q'))|.
$$
Denote
$$
  \Lambda_1=\left\{k:\,  Q(x_k, \rho(x_k)) \subseteq( Q(0,2^{J_{\varepsilon}} ))^c \right\}  \quad   \text{and}\quad  \Lambda_2=\left\{k:\,  Q(x_k, \rho(x_k)) \cap  Q(0,2^{J_{\varepsilon}} )   \neq \emptyset \right\} .
$$
Combining  the definition \eqref{eqn:def-BMOL-norm}, \eqref{eqn:CMO-c} and \eqref{eqn:inclusion},   whenever $Q'(c_Q, \ell(Q'))\cap Q(0,2^{J_{\varepsilon}} )   \neq \emptyset$,
\begin{align*}
    f_{Q'(c_Q, \ell(Q'))} &\leq \frac{1}{|Q'(c_Q, \ell(Q'))|}   \sum_{k\in \Lambda_1} \int_{Q(x_k, \rho(x_k))}f(x)dx+ \frac{1}{|Q'(c_Q, \ell(Q'))|} \int_{\bigcup_{k\in \Lambda_2} Q(x_k, \rho(x_k) ) } f(x)dx   \\
      & \leq C \varepsilon \frac{\sum_{k\in \Lambda_1}  |Q(x_k, \rho(x_k))| }   {|Q'(c_Q, \ell(Q'))|} +   C \frac{|Q(0,C'2^{J_{\varepsilon}})| }{|Q'(c_Q, \ell(Q'))|}  f_{Q(0,C'2^{J_{\varepsilon}}) }\\
   & \leq C \varepsilon +   C \|f\|_{{\rm BMO}_{\L}}\frac{|Q(0,C'2^{J_{\varepsilon}}) | }{|Q'(c_Q, \ell(Q'))|},
\end{align*}
where the last inequality follows from  \eqref{eqn:rho-0} and $C'>1$.

To continue, observe that  there exists a positive $J'_{\varepsilon}>J_{\varepsilon}$ such that for any  $Q(c_Q,\rho(c_Q))\subseteq Q(0,2^{J'_{\varepsilon}})^c$, we have 
\begin{equation}\label{eqn:cond1}
    \frac{|Q(0,C'2^{J_{\varepsilon}})|}{|Q'(c_Q, \ell(Q'))|}<\varepsilon\quad \text{whenever}\ \     Q'(c_Q, \ell(Q'))\cap Q(0,2^{J_{\varepsilon}} )   \neq \emptyset,\,  \ell(Q')\geq \rho(c_Q).
\end{equation}

Indeed, since $|c_Q|\geq 2^{J'_{\varepsilon}}$, for any cube $Q'(c_Q, \ell(Q'))$ having nonnempty intersection with $Q(0,2^{J_{\varepsilon}} )$, we have 
 $$
    \ell(Q')\gtrsim |c_Q|-2^{J_{\varepsilon}}\geq  2^{J'_{\varepsilon}-1},
 $$
which ensures \eqref{eqn:cond1} by taking $J'_{\varepsilon}\gtrsim  J_{\varepsilon}-(\log_2 \varepsilon)/n$. Note that the condition ``$\ell(Q')\geq \rho(c_Q)$'' for such $Q'$ is also compatible: since $Q(c_Q,\rho(c_Q))\subseteq Q(0,2^{J'_{\varepsilon}})^c$,  there exists some integer $j\geq 0$ such that $|c_Q|\approx  2^{J'_{\varepsilon}+j}$, then by Lemma \ref{lem:size-rho} and \eqref{eqn:rho-0},  
$$
    \rho(c_Q)\lesssim 2^{(J'_{\varepsilon}+j)\cdot\frac{k_0}{k_0+1}},
$$
which implies that  
if $Q'(c_Q, \ell(Q'))\cap Q(0,2^{J_{\varepsilon}} )   \neq \emptyset$, we must have  $\ell(Q')\geq  2^{J'_{\varepsilon}+j-1}-2^{J_{\varepsilon}}\geq \rho(c_Q)$ by taking $J'_{\varepsilon}\gtrsim (k_0+1) J_{\varepsilon}$ sufficently large.

Therefore, for any $\varepsilon>0$, there exists positive integers  $J'_{\varepsilon}> J_{\varepsilon}$ such that for any cube $Q(c_Q,\rho(Q))\subseteq (Q(0,2^{J'_{\varepsilon}}))^c$, 
$$
    M'_2f(c_Q)\lesssim \varepsilon.
$$

From the above, \eqref{eqn:gamma5} holds, and  $\widetilde{\gamma}_5(Mf)=0$ follows readily.

We complete the proof of Theorem \ref{thm:M-CMO}.
\end{proof}

\smallskip

\begin{remark}
Let $R_j^*$ be the adjoint of the Riesz transform $R_j$ associated to $\L$ for $j=1,2\ldots, n$.
Recall that  (see \cite[(5.5)]{Shen1}) for $V\in {\rm RH}_q$ with $q> n/2$, 
$$
   \|R_j^* f\|_{L^p(\mathbb R^n)}\leq C_p\|f\|_{L^P(\mathbb R^n)} \quad \text{for}\    p'_0\leq p<\infty,
$$
where $p_0'=p_0/(p_0-1)$ and $1/(p_0)=(1/q)-(1/n)$.

For the endpoint case  $p=\infty$, we have 
$$
    \|R_j^* f\|_{{\rm BMO}_{\L}}\leq C \|f\|_{L^\infty(\mathbb R^n)}.
$$
More precisely, for each $g\in {\rm BMO}_{\L}(\mathbb R^n)$, we can represent $g$ as $\displaystyle g=\varphi_0+\sum_{j=0}^n R_j^* \varphi_j$, where $\varphi_j\in L^\infty(\mathbb R^n)$ for $0\leq j\leq n$, and $\displaystyle \|g\|_{{\rm BMO}_{\L}}\approx \sum_{j=0}^n \|\varphi_j\|_{L^\infty}$; see \cite[Theorem 1.3]{WY}.

Furthermore, one may combine \eqref{eqn:M-BMO} and  the John--Nirenberg type inequality to see for $1\leq p<\infty$ and  $f\in L^\infty(\mathbb R^n)$, 
$$
    \sup_{B=B(x_B,r_B):\, r_B\geq \rho(x_B)} \bigg(\frac{1}{|B|}\int_B |R_j^* f(x)|^p dx\bigg)^{1/p} \leq C_p \|f\|_{L^\infty}.
$$
\end{remark}

\begin{remark}\label{rem:CMO-M}
As a straightforward consequence,  when $V\equiv 0$ (i.e., $\rho(x)\equiv +\infty$), the above result implies that the Hardy--Littlewood maximal operator is bounded on the classical ${\rm CMO}(\mathbb R^n)$. Specifically, 
 for any $f$ belongs to ${\rm CMO}(\mathbb R^n)$ for which $Mf$ is not identically equal to infinity, then $Mf$ also belongs to ${\rm CMO}(\mathbb R^n)$, and 
$$
   \|Mf\|_{{\rm BMO}(\mathbb R^n)}\leq C \|f\|_{{\rm BMO}(\mathbb R^n)}.
$$
This result is a straightforward consequence of our argument by noting that  $f\in {\rm CMO}(\mathbb R^n)$ if and only if $f\in {\rm BMO}(\mathbb R^n)$ and $\widetilde{\gamma}_1(f)=\widetilde{\gamma}_2(f)=\widetilde{\gamma}_3(f)=0$, hence it suffices to prove $\widetilde{\gamma}_1(Mf)=\widetilde{\gamma}_2(Mf)=\widetilde{\gamma}_3(Mf)=0$. Among them, $\widetilde{\gamma}_1(Mf)=\widetilde{\gamma}_3(Mf)=0$ has been proved, and one may combine
\eqref{eqn:aim11}  and $\widetilde{\gamma}_2(f)=0$ to deduce \eqref{eqn:aim12} also holds for any cube whose sidelength is sufficiently large, that is, we obtain the remaining $\widetilde{\gamma}_2(Mf)=0$.

Now we address the fact that there exists $f\in {\rm CMO}(\mathbb R^n)$ such that $Mf\equiv +\infty$. 

We consider $n=1$ for simplicity. Define
$$
    f(x)=
    \begin{cases}
     \ln \ln |x|, \quad &  |x|\geq e,\\[6pt]
     0,\quad & |x|<e.
    \end{cases}
$$
It's clear $f$ is a uniformly continuous function on $\mathbb R$. As a consequence, $\widetilde{\gamma}_1(f)=0$.

We begin by verifying that  $f\in {\rm BMO}(\mathbb R)$.   To see it, we will use   
$$
   \|f\|_{{\rm BMO}(\mathbb R)}\approx \sup_{I:=[a,b]\subseteq \mathbb R} \inf_{  {\mathsf {Avg}}_I\in \mathbb R}  \frac{1}{b-a}\int_a^b \left|f(x)- {\mathsf {Avg}}_I\right|dx.
$$

Let $M$ be a sufficiently large integer. For  any interval $I=[a,b]$,\begin{itemize}
\item {\it Case I}.  $I\subseteq [-10M, 10M]$. Then 
$$
     \frac{1}{b-a} \int_a^b \left| f(x)-f_{[a,b]}\right| dx\leq 2\|f\|_{L^\infty([-10M, 10M])}.
$$

\item {\it Case II}.   $I\cap (\mathbb R\setminus  [-10M, 10M])\neq \emptyset$.

 Write $[a,b]=[c-R, c+R]$ with $c=\frac{a+b}{2}$ and $R=\frac{b-a}{2}$.

\begin{itemize}
\item Subcase 1. $I\cap [-M,M]=\emptyset$. 

Since $f$ is an even function, one may assume that $[a,b]\subseteq (M, +\infty)$. 
\smallskip
\begin{itemize}
\item 
{\it Subsubcase 1-1}.
If $c>2R$,   let $\mathsf{Avg}_I=\ln \ln c$. Combining $a=c-R>\max\{M, R\}$, we have 
\begin{align*}
&\frac{1}{b-a} \int_a^b \left| f(x)-\mathsf{Avg}_I\right| dx\\
\leq& \max\Big \{\ln \ln c- \ln \ln (c-R),  \ln \ln (c+R) -\ln \ln c  \Big \}\\
\leq &\max\left\{ \ln \left(1+  \frac{\ln(1+\frac{R}{a})}{\ln a}\right)   ,   \ln \left(1+  \frac{\ln(1+\frac{R}{c})}{\ln c}\right) \right\}\\
\leq& \ln \left(1+\frac{\ln 2}{\ln M}\right),
\end{align*}
which is sufficiently small.

\smallskip

\item 
{\it Subsubcase 1-2}.  Otherwise, if $c<2R$,  let $\mathsf{Avg}_I=\ln \ln R$.    
\begin{align*}
\frac{1}{b-a} \int_a^b \left| f(x)-\mathsf{Avg}_I\right| dx& = \frac{1}{2R}\int_{M<x<3R} \left|\ln \left( 1+ \frac{\ln \frac{x}{R}}{\ln R}\right)  \right|dx\\
&\lesssim \int_{0}^3     \left|\ln \left( 1+ \frac{\ln x}{\ln M}\right)  \right|dx\leq C_M,
\end{align*}
and $C_M$ is sufficiently small since $M>>1$.
\end{itemize}
Note that the argument in {\it Subcase 1} also implies that $\widetilde{\gamma}_3(f)=0$ (in fact we do not use the assumption ``$I\cap (\mathbb R\setminus  [-10M, 10M])\neq \emptyset$'').
\smallskip

\item Subcase 2.  $I\cap [-M,M]\neq\emptyset$. Then $R>9M/2$ and it's obvious that $|c|<2R$, thus $[a,b]\subseteq [-3R, 3R]$. Similar to {\it Subsubcase 1-2}, let $\mathsf{Avg}_I=\ln \ln R$ and    
\begin{align*}
\frac{1}{b-a} \int_a^b \left| f(x)-\mathsf{Avg}_I\right| dx& \leq \frac{1}{2R}\int_{e<|x|<3R} \left|\ln \left( 1+ \frac{\ln \frac{|x|}{R}}{\ln R}\right)  \right|dx\\
&\lesssim \int_{|x|<3}     \left|\ln \left( 1+ \frac{\ln |x|}{\ln M}\right)  \right|dx\leq 2C_M.
\end{align*}

Notably, we also complete the proof of   $\widetilde{\gamma}_2(f)=0$.
\end{itemize}
\end{itemize} 

From the above, we obtain $f\in {\rm BMO}(\mathbb R)$ and $\widetilde{\gamma}_1(f)=\widetilde{\gamma}_2(f)=\widetilde{\gamma}_3(f)=0$. Hence $f\in {\rm CMO}(\mathbb R)$, as desired.

\smallskip

For any $x\in \mathbb R$,  we may  assume  $x\geq 0$ since $f$ is even.  Consider the interval $I_R=[x, x+2R]$ for some $R>>1$,  
$$
  \frac{1}{2R}\int_{I_R} \left| f(y)\right|dy\geq \frac{1}{2R}\int_{x+R}^{x+2R} \ln \ln y \,dy\geq \frac{\ln \ln R}{2},
$$
which deduces that $Mf(x)=+\infty$ for every $x\in \mathbb R$. This is a remarkable phenomenon that   $f\in {\rm CMO}$ can not ensure that $Mf<+\infty$.  However, for any $f$ belongs to ${\rm CMO}$ for which $Mf$ is not identically equal to infinity, then $Mf$ is bounded on ${\rm CMO}$.
\end{remark}

\medskip

\section{Riesz transforms and ${\rm CMO}_{\L}(\mathbb{R}^n)$: proof of Theorem \ref{thm:Riesz-CMO}}
\label{sec:Riesz}
\setcounter{equation}{0}

 For each  $j=1,\ldots, n$, let $R_j(x,y)$ be  kernels of Riesz transforms $\displaystyle R_j=\frac{\partial}{\partial x_j}\L^{-1/2}$. Then the adjoint of $R_j$ is given by 
 $$
    R_j^* g(x)=\lim_{\varepsilon\to 0}\int_{|y-x|>\varepsilon}  R_j(y,x)g(y) dy.
 $$

\smallskip

\begin{proof}[Proof of Theorem~\ref{thm:Riesz-CMO}]
For each given $j=1,\ldots, n$, recall that $R_j^*$ is a bounded linear operator from $L^\infty(\mathbb{R}^n)$ to ${\rm BMO}_{\L}(\mathbb{R}^n)$. This, combined with  the fact that $C_0(\mathbb{R}^n)$ is the closure of $C_c^\infty(\mathbb{R}^n)$ in $L^\infty(\mathbb{R}^n)$, deduces that 
$$
      R_j^*(C_0(\mathbb{R}))\subseteq 
      \overline{R_j^* \left(C_c^\infty(\mathbb{R}^n)\right)}^{{\rm BMO}_{\L}},
$$
where $C_0(\mathbb{R}^n)$  is the space of all continuous
functions on $\mathbb{R}^n$ which vanish at infinity.
Meanwhile, it follows from Theorem~C in \cite{SW} that the spaces ${\rm CMO}_{\L}(\mathbb{R}^n)$ is  the closure in the ${\rm BMO}_{\L}(\mathbb{R}^n)$ norm of $C_0(\mathbb{R}^n)$ (i.e., (iii) of Theorem \ref{thm:CMO-dual-known}). 
Therefore, to prove Theorem~\ref{thm:Riesz-CMO}, it suffices to show
\begin{equation}\label{eqn:Riesz-Compact}
    R_j^* \left(C_c^\infty(\mathbb{R}^n)\right)\subseteq C_0(\mathbb{R}^n).
\end{equation}

\smallskip

\noindent{\it Step I.}\  \
Let  $R_j^0(x,y)$ be  kernels of the classical Riesz transforms $\displaystyle R_j^0 =\frac{\partial}{\partial x_j}(-\Delta)^{-1/2}$, $j=1,\ldots, n$.
Since $V\in RH_q$ for some $q\geq n/2$, we may assume that $n/2 \leq  q<n$  (the case for $q\geq n$ is simpler because the relevant kernels then exhibit better regularity). Notably, one remarkable fact is the self-improvement of the $RH_q$ class  that  for any $V\in RH_q$,  there exists $\varepsilon>0$ depending only on $C$ in \eqref{eqn:Reverse-Holder} and the dimension $n$ such that $V\in RH_{q+\varepsilon}$. Therefore,  $V\in B_{q_1}$ for some  $n/2\leq q<q_1<n$.

Applying Lemmas 5.7 and 5.8 in \cite{Shen1}, we have  for each $\varphi\in C_c^\infty (\mathbb{R}^n)$,
\begin{align}\label{eqn:aux-Riesz-1}
	\left|R_j^*(\varphi)(x)\right|&\leq \left| \int_{|y-x|> \rho(x)} R_j(y,x)\,\varphi(y)\, dy\right| +\left|  \int_{ |y-x|\leq \rho(x_0)} \left[R_j (y,x)-R_j^0 (y,x)\right]\varphi (y)\, dy\right| \nonumber\\
	&\quad + \left|\int_{|y-x|\leq \rho(x)}  R_j^0(y,x)\, \varphi(y) \, dy \right|\nonumber\\
	&\leq  C \left[M\left(|\varphi|^{p_1 '}\right)(x)\right]^{1/{p_1 '}} +2 \sup_{\tau>0}  \left| \int_{|y-x|>\tau}  R_j^0(y,x) \,\varphi(y)   \,dy   \right|,
\end{align}
where $\displaystyle \frac{1}{p'_1}=1-\frac{1}{q_1}+\frac{1}{n}$, and $M$ denotes the uncentered Hardy--Littlewood maximal operator. It's clear that
$$
\left\|   \left[M\left(|\varphi|^{p_1 '}\right)\right]^{1/{p_1 '}}  \right\|_{ L^\infty }\leq \|\varphi\|_{L^\infty}\   \   \   {\rm and }     \   \
\left[M\left(|\varphi|^{p_1 '}\right)(x)\right]^{1/{p_1 '}} \to 0\   \      {\rm as} \   \    |x|\to \infty,
$$
provided by $\varphi\in C_c^\infty(\mathbb{R}^n)$.
Moreover,  by Cotlar's inequality,
\begin{equation}\label{eqn:maximal-Riesz-1}
	\sup_{\tau>0}  \left| \int_{|y-x|>\tau}  R_j^0(y,x)\, \varphi(y)   dy   \right|\leq  C \left[  M\left(R_j^0(\varphi)\right)(x) +M(\varphi)(x)\right].
\end{equation}
It's clear that  $R_j^0(\varphi)\in C_0(\mathbb{R}^n)$ since the Fourier transform of $R_j^0(\varphi)$ belongs to $L^1(\mathbb{R}^n)$. This allows us to verify readily that the left hand side of \eqref{eqn:maximal-Riesz-1} is a $L^\infty$ function and vanishes at infinity, as desired.

 From the above, $R_j^*(\varphi)\in L^\infty$ and $\displaystyle \lim_{|x|\to \infty} R_j^*(\varphi)=0$ for any given $\varphi\in C_c^\infty(\mathbb{R}^n)$.

\smallskip

\noindent{\it Step II.}\  \
It remains to show $R_j^*(\varphi)$ is continuous. To this end, it suffices to show for any given $x_0\in \mathbb{R}^n$ and $\varepsilon>0$,  there exists a positive constant $\theta=\theta(x_0,\varepsilon)$ sufficiently small, such that for any $x_1\in B(x_0,\theta)$,
\begin{equation}\label{eqn:continue-Riesz-goal}
    \left|R_j^*(\varphi)(x_1)-R_j^* (\varphi)(x_0)\right|\lesssim \varepsilon  \quad {\rm for}\    \   x_1\in B(x_0,\theta).
\end{equation}

Firstly, we fix a positive integer $\kappa_0>>1$ such that
\begin{equation}\label{eqn:assume1}
      2^{-\kappa_0 (2-{n/q_1})} <\frac{\varepsilon}{   \Big\|\left[M\left(|\varphi|^{p_1 '}\right)\right]^{1/{p_1 '}} \Big\|_{L^\infty}}.
\end{equation}
For every $j=1,\ldots,n$ and $x\in B(x_0,2^{-\kappa_0}\rho(x_0))$, rewrite
\begin{align*}
	R_j^*(\varphi)(x)&= \int_{|y-x|> 2^{-\kappa_0}\rho(x_0)} R_j(y,x)\,\varphi(y)\, dy + \lim_{\tau\to 0} \int_{\tau <|y-x|\leq 2^{-\kappa_0}\rho(x_0)} \left[R_j (y,x)-R_j^0 (y,x)\right]\varphi (y)\, dy \\
	&\quad + \left[ ( R_j^0)^*(\varphi)(x) - \int_{|y-x|> 2^{-\kappa_0}\rho(x_0)}  R_j^0(y,x)\, \varphi(y) \, dy\right]\\
	&=: T_1(\varphi)(x)+T_2(\varphi)(x)+T_3(\varphi)(x),
\end{align*}
where $( R_j^0)^*(\varphi)(x)=- R_j^0 (\varphi)(x)$ due to the anti-symmetric property of the kernel of $R_j^0$.

We observe the following facts:

\begin{itemize}
	\item  Note that   (see \cite[p. 540]{Shen1})
$$
    \left(\int_{2^{j-1} \rho(x)<|y-x|\leq 2^{j}\rho(x)  }   \left|R_j (y,x)-R_j^0 (y,x)\right|^{p_1} dy   \right)^{1/{p_1}}\leq C (2^j)^{2-(n/{q_1})}(2^j \rho(x))^{-n/{p'_1}}\quad \text{for}\   j\leq 0,
$$	
and   $\rho(x)\approx \rho(x_0)$ for $x\in B(x_0, 2^{-\kappa_0}\rho(x_0))$, thus there exists a positive integer $\mathsf M$ such that 
\begin{align*}
   & \left(\int_{2^{j-1} \rho(x_0)<|y-x|\leq 2^{j}\rho(x_0)  }   \left|R_j (y,x)-R_j^0 (y,x)\right|^{p_1} dy   \right)^{1/{p_1}}\\
   \leq &  \left(\int_{2^{j-\mathsf M } \rho(x)<|y-x|\leq 2^{j+\mathsf M}\rho(x)  }   \left|R_j (y,x)-R_j^0 (y,x)\right|^{p_1} dy   \right)^{1/{p_1}}\\
   \leq& C (2^j)^{2-(n/{q_1})}(2^j \rho(x_0))^{-n/{p'_1}}\quad \text{for}\   j\leq 0.
\end{align*}
Consequently, in combination with \eqref{eqn:assume1},
\begin{align*}
	   &  |T_2(\varphi)(x)|\\
	   \leq &\int_{|y-x|\leq C 2^{-\kappa_0}\rho(x)} \left|R_j (y,x)-R_j^0 (y,x)\right| \, |\varphi (y)|\, dy\\
	     \leq& \sum_{j=-\infty}^{-\kappa_0} \left( \int_{|y-x|\leq 2^{j}\rho(x_0) } |f(y)|^{p'_1} dy\right)^{1/{p_1}}   \left(\int_{2^{j-1} \rho(x_0)<|y-x|\leq 2^{j}\rho(x_0)  }   \left|R_j (y,x)-R_j^0 (y,x)\right|^{p_1} dy   \right)^{1/{p_1}} \\
	     \leq& C 2^{-\kappa_0 (2- {n/q_1})} \left[M\left(|\varphi|^{p_1 '}\right)(x)\right]^{1/{p_1 '}}\\
	     \leq &  C\varepsilon
\end{align*}
	for any $x\in B(x_0, 2^{-\kappa_0}\rho(x_0))$.
	
	\smallskip
	
	\item For  the continuity of   $T_3(\varphi)$,   note that $(R_j^0)^*(\varphi)=-R_j^0(\varphi)\in C_0(\mathbb{R}^n)$ mentioned in {\it Step I}, it remains to show the continuity of 
$$
T_4(\varphi):=\int_{|y-x|> 2^{-\kappa_0}\rho(x_0)}  R_j^0(y,x) \,\varphi(y) \, dy,
$$
which is a bounded function by combining \eqref{eqn:maximal-Riesz-1} and the fact $R_j^0(C_c^\infty(\mathbb{R}^n))\subseteq C_0(\mathbb{R}^n)$.
	
	\smallskip
	
	Denote $E_x:=
   \{y:\, |y-x|\geq 2^{-\kappa_0}\rho(x)  \}$ for  $x\in B(x_0,2^{-\kappa_0}\rho(x_0))$, then
   $$
      \Delta_{\mu}  : \   \    \  x\mapsto  \left|\Delta E(x)\right|:= \left | \left(E_x\cap E_{x_0}\right )\setminus\left (E_x\cap E_{x_0}\right) \right|
   $$
is continuous on $B(x_0, 2^{-\kappa_0}\rho(x_0))$ and $\Delta_{\mu}(x_0)=0$. This, combined with   the explicit expression of $R_j^0(y,x)$, deduces that $T_4(\varphi)$ is continuous on $x_0$, whenever $\varphi\in C_c^\infty(\mathbb{R}^n)$.

\end{itemize}

\smallskip

From the above, \eqref{eqn:continue-Riesz-goal} follows readily if one could prove  that for any given $x_0\in \mathbb{R}^n$, there exists $\theta<2^{-\kappa_0 }\rho(x_0)$ such that
\begin{equation}\label{eqn:continue-Riesz-goal-step}
    \big|T_1(\varphi)(x_1)-T_1 (\varphi)(x_0)\big|\lesssim \varepsilon  \quad {\rm for}\    \   x_1\in B(x_0,\theta).
\end{equation}

\smallskip

\noindent{\it Step III.}\  \
Now let's verify the continuity of $T_1(\varphi)$ by proving \eqref{eqn:continue-Riesz-goal-step}.

Given $x_0\in \mathbb{R}^n$,  for each $x_1\in B(x_0,2^{-\kappa_0}\rho(x_0))$,
$$
     T_1(\varphi)(x_1)-T_1(\varphi)(x_0)=\int_{|y-x_0|> 2^{-\kappa_0}\rho(x_0)}  \left[  R_j(y,x_1)-R_j(y,x_0)\right]   \varphi(y) \,dy +\mathcal{E}(x_1),
$$
where
$$
    \big|\mathcal{E}(x_1)\big|\leq \int_{\Delta E(x_1)} \left|R_j(y,x_1)\right|\left|\varphi(y)\right |\, dy \to 0  \quad {\rm as}\  \    \  x_1\to x_0.
$$
Hence it remains to show
$$ 
    \int_{|y-x_0|> 2^{-\kappa_0}\rho(x_0)}  \left[  R_j(y,x_1)-R_j(y,x_0)\right]   \varphi(y) \,dy   \to 0 \quad {\rm as}\  \    \  x_1\to x_0.
$$ 
Let $\Gamma(x,y,\tau)$ denote the fundamental solution for the Schr\"odinger operator $-\Delta+(V+i\tau)$, $\tau\in \mathbb{R}$. Clearly,
$$
    \Gamma(x,y,\tau)=\Gamma(y,x,-\tau),
$$
and $\nabla_y \Gamma(x,y,\tau)$ is a solution to the equation $-\Delta u+(V+i\tau)u=0$ in $\mathbb{R}^n  \setminus  \{ y\}$ as a function of $x$.  Consequently, $\nabla_y \Gamma(y,x,\tau)$ is a solution to the equation $-\Delta u+(V+i(-\tau))u=0$ in $B(x_0, 2^{-(\kappa_0+1)}\rho(x_0))$, whenever $|y-x_0|> 2^{-\kappa_0}\rho(x_0)$.  Denote
$$
\delta= 2-{n/{q_1}}>0\quad \text{and}\quad r_0=2^{-(\kappa_0+2)}\rho(x_0).
$$ 
By the imbedding theorem of Morrey and \cite[Lemma 4.6]{Shen1} (see also the last inequality in \cite[page 534]{Shen1}), we have 
for any $x_1\in B(x_0,2^{-(\kappa_0+4)}\rho(x_0))=B(x_0,r_0/4)$,  
\begin{align*}
  & \left|\nabla_y \Gamma(y, x_1,\tau)-\nabla_y \Gamma(y,x_0,\tau)\right|\\
  \leq & C |x_1-x_0|^{1-n/{p_1}} \left(\int_{B(x_0,r_0)} \left| \nabla_x \nabla_y \Gamma(y, x,\tau) \right|^{p_1}dx\right)^{1/{p_1}}\\
  \leq& C \left(\frac{|x_1-x_0|}{r_0}\right)^{\delta }  \left\|\nabla_y \Gamma(y,x,\tau)\right\|_{L_x^\infty  (B(x_0,2r_0) )}   \left[1+\frac{1}{r_0^{n-2}}\int_{B(x_0,2r_0)} V(x) dx  \right],
\end{align*}
where $p_1$ is the index in  \eqref{eqn:aux-Riesz-1}. Alternatively,   one can apply  a similar argument to that of \cite[Proposition~2.12]{JL} to show the H\"older continuity.
 
To continue, we address the following facts.
\begin{itemize}
	\item By Lemma~1.2 in \cite{Shen1},
$$
   \frac{1}{r_0^{n-2}}\int_{B(x_0,2r_0)} V dx\leq C 2^{-\kappa_0 \delta}\lesssim 1.
$$

\item It's  known that $\rho(x)\approx \rho(x_0)$ whenever $|x-x_0|\lesssim \rho(x_0)$. Now regard $  \Gamma(y,x,\tau)$ is a solution to the equation $-\Delta u+(V+i\tau)u=0$ in $\mathbb{R}^n  \setminus  \{ x\}$ as a function of $y$, and $B(y, |y-x_0|/4)\cap B(x_0,2r_0)=\emptyset$.
Hence it follows from combining (4.8) and Theorem~2.7 in \cite{Shen1} that for each $m\in \mathbb{N}$, there exists a constant $C_m>0$ such that
\begin{align*}
    &  \left\|\nabla_y \Gamma(y,x,\tau)\right\|_{L_x^\infty  (B(x_0,2r_0) )} \\
    &\leq \frac{C_m}{\left(1+|\tau|^{1/2}R_y\right)^m   \left(1+R_y / \rho(x_0)\right)^m} \left[\frac{1}{R_y ^{n-2}}\int_{B(y,R_y)}  \frac{V(z)}{|z-y|^{n-1}}dz  +\frac{1}{R_y ^{n-1}}\right],
\end{align*}
where $R_y:=|y-x_0|/4$. 

\end{itemize}

Therefore, when $x_1\in B(x_0, r_0/4)$ and $|y-x_0|> 2^{-\kappa_0}\rho(x_0)=4r_0$, one may combine these estimates above  to obtain
\begin{align*}
    & \left|\nabla_y \Gamma(y, x_1,\tau)-\nabla_y \Gamma(y,x_0,\tau)\right| \\
    \leq \, &   \frac{C_m}{\left(1+|\tau|^{1/2}R_y\right)^m  \left(1+R_y / \rho(x_0)\right)^m}   \left(\frac{|x_1-x_0|}{r_0}\right)^\delta      \left[\frac{1}{R_y ^{n-2}}\int_{B(y,R_y)}  \frac{V(z)}{|z-y|^{n-1}}dz  +\frac{1}{R_y ^{n-1}}\right].
 \end{align*}
Furthermore,
\begin{align*} 
	\left|R_j(y,x_1)-R_j(y,x_0)\right|&= \left|  -\frac{1}{2\pi}\int_{\mathbb{R}} (-i\tau)^{-1/2} \left[\nabla_y \Gamma(y,x_1,\tau) -\nabla_y\Gamma(y,x_0,\tau) \right]  d\tau\right|\nonumber\\
	&\leq \frac{C_m}{\left(1+R_y/ \rho(x_0)\right)^m }  \left(\frac{|x_1-x_0|}{r_0}\right)^\delta    \left[\frac{1}{R_y ^{n-1}}\int_{B(y,R_y)}  \frac{V(z)}{|z-y|^{n-1}}dz  +\frac{1}{R_y ^ n}\right].
\end{align*} 

Note that
$$
      \Big\{y\in \mathbb{R}^n:\    |y-x_0|>2^{-\kappa_0}\rho(x_0) \Big\} =\bigcup_{k=-\kappa_0+1}^\infty \Big\{y\in \mathbb{R}^n:\     2^{k-1}\rho(x_0)< |y-x_0|\leq 2^{k}\rho(x_0) \Big\},
$$
and for each $k\geq \kappa_0+1$, it follows from  the Hardy--Littlewood--Sobolev inequality to obtain
\begin{align*}
  &   \left|\int_{ 2^{k-1}\rho(x_0)<|y-x_0|\leq 2^k \rho(x_0)  }  \left[  R_j(y,x_1)-R_j(y,x_0)\right]   \varphi(y) \,dy\right|\\
  \leq \, &  \frac{C_m} {\left(1+2^k\right)^m }  \left(\frac{|x_1-x_0|}{r_0}\right)^\delta   \left\{ \frac{1}{(2^k \rho(x_0)) ^{n-1}} \int_{2^{k-1}\rho(x_0)<|y-x_0|\leq 2^k \rho(x_0)}    \left(\int_{\mathbb R^n}  \frac{V(z)\mathsf 1_{B(y,R_y)}(z)}{|z-y|^{n-1}}dz \right)  |\varphi(y)|dy  +M\varphi(x_0)\right\}
    \\ 
     \leq \, &  \frac{C_m} {\left(1+2^k\right)^m }  \left(\frac{|x_1-x_0|}{r_0}\right)^\delta   \left\{ \frac{1}{(2^k \rho(x_0)) ^{n-1}}  \bigg(\int_{2^{k-1}\rho(x_0)<|y-x_0|\leq 2^k \rho(x_0)}    \left|\int_{B(y,R_y)}  \frac{V(z)} {|z-y|^{n-1}}dz \right|^{p_1 }dy\bigg)^{1/{p_1}} \right.\\
    &\hspace{6cm} \left.\cdot \left(\int_{|y-x_0|\leq 2^k \rho(x_0)} |\varphi(y)|^{p_1 '}dy \right)^{1/{p_1 '}}   +M\varphi(x_0)\right\}\\
    \leq \,&    \frac{C_m} {\left(1+2^k\right)^m }  \left(\frac{|x_1-x_0|}{r_0}\right)^\delta    \frac{1}{(2^k \rho(x_0)) ^{n-1}}   \left(\int_{2^{k-2}\rho(x_0)<|y-x_0|\leq 2^{k+1}\rho(x_0)} V(z)^{q_1} dz\right)^{1/{q_1}}  \left[ M\left(|\varphi|^{p_1 '}\right)(x_0)\right]^{1/{p_1 '}} \left(2^k \rho(x_0) \right)^{n/{p_1 '}}\\
     &\  + \frac{C_m} {\left(1+2^k\right)^m }  \left(\frac{|x_1-x_0|}{r_0}\right)^\delta   M\varphi(x_0).
\end{align*}

Moreover, the reverse H\"older inequality possessed by $V\in RH_{q_1}$ 
deduces
$$
   \left(\int_{2^{k-2}\rho(x_0)<|y-x_0|\leq 2^{k+1}\rho(x_0)} V(z)^{q_1} dz\right)^{1/{q_1}}  \leq  C \left(2^{k}\rho(x_0)\right)^{n/{q_1}-2} \frac{1}{    \left(2^{k}\rho(x_0)\right)^{n-2}}\int_{B(x_0,2^{k+1}\rho(x_0))} V dy.
$$
Moreover, by using  the doubling property (1.1)   in \cite{Shen1},  we have
\begin{align}\label{eqn:aux-potential-Lp}
   \left(\int_{2^{k-1}\rho(x_0)<|y-x_0|\leq 2^{k}\rho(x_0)} V(y)^{q_1} dy\right)^{1/{q_1}}
   & \leq
   \begin{cases}\displaystyle
   \left (2^k \rho(x_0)\right)^{n/{q_1}-2} ,   &  \mbox{if }  \  k< 0;\\[10pt]
   	\left (2^k \rho(x_0)\right)^{n/{q_1}-2}C_0^k,   &  \mbox{if }  \  k\geq 0,
   \end{cases}
    \end{align}
   where $C_0>1$ is the doubling constant in (1.1) in \cite{Shen1}.
   Without loss of generality, assume that $C_0>2$ and take $m=2\cdot \log_2 C_0$ such that for any $k\geq 0$,
   $$
           \frac{C_m}{\left(1+2^k\right)^m } C_0^k\leq \frac{C} {2^{k\log_2 C_0}},
   $$
  where $C$ is a positive constant independent of $k\geq 0$.

Therefore, for any given $\varphi\in C_c^\infty(\mathbb{R}^n)$ and  $x_0\in \mathbb{R}^n$,
\begin{align*}
  &   \left|\int_{|y-x_0|> 2^{-\kappa_0}\rho(x_0)}  \left[  R_j(y,x_1)-R_j(y,x_0)\right]   \varphi(y) \,dy\right|\\
  \leq\,  &  \sum_{k=-\kappa_0 +1}^\infty \frac{C}{\max\{1,2^{k\log_2 C_0}\} }  \left(\frac{|x_1-x_0|}{2^{-(\kappa_0+1)}\rho(x_0)}\right)^\delta  \left[ M\left(|\varphi|^{p_1 '}\right)(x_0)\right]^{1/{p_1 '}}  \\
  &\quad +   \sum_{k=-\kappa_0 +1}^\infty  \frac{C} {1+2^k } \left(\frac{|x_1-x_0|}{2^{-(\kappa_0+1)}\rho(x_0)}\right)^\delta    M\varphi(x_0)\\
  \leq \, &  C \frac{\kappa_0}{\left(2^{-(\kappa_0+1)}\rho(x_0)\right)^\delta}   \left\{   \left[ M\left(|\varphi|^{p_1 '}\right)(x_0)\right]^{1/{p_1 '}}  + M\varphi(x_0) \right\}  |x_1-x_0|^\delta     \to 0 \quad {\rm as}\  \    \  x_1\to x_0.
\end{align*}
That is,  \eqref{eqn:continue-Riesz-goal-step} holds and  $T_1(\varphi)$ is continuous on arbitrary given $x_0$.

This, combined with the result in {\it Step I}, deduces that  $T_1(\varphi)\in C_0(\mathbb{R}^n)$, as desired.

 \smallskip

 Therefore, we complete the proof of Theorem~\ref{thm:Riesz-CMO}.
\end{proof}

\smallskip

As a consequence, we have  the following representation based on Theorem~\ref{thm:Riesz-CMO}.

\begin{lemma}\label{lem:Riesz-CMO-2}
Suppose $V\in RH_q$ for some $q\geq n/2$.
For every continuous linear functional $\ell$ on the ${\rm CMO}_{\L}(\mathbb{R}^n)$ space, there exists a uniquely finite Borel measure $\mu_0$ whose Riesz transforms $R_j(d \mu_0)(x)=\int R_j (x,y)\,d\mu_0(y)$ associated to $\L$ for $j=1,2,\ldots, n$ are all finite Borel measures, 
 such that the functional $\ell$ can be realized by 
$$
     \ell(g)=\int_{\mathbb{R}^n}   g(x)\, d\mu_0(x),
$$
which is initially defined on  the dense subspace $C_0(\mathbb{R}^n)$, and has a unique extension to ${\rm CMO}_{\L}(\mathbb{R}^n)$.
\end{lemma}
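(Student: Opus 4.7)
\medskip

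\noindent\textbf{Proof plan.} The plan is to obtain $\mu_0$ by invoking the classical Riesz--Markov--Kakutani theorem on $C_0(\mathbb{R}^n)$, then to use Theorem~\ref{thm:Riesz-CMO} to produce finite Borel measures realizing the Riesz transforms of $\mu_0$. First I would observe that the defining expression \eqref{eqn:def-BMOL-norm} yields $\|g\|_{{\rm BMO}_{\L}(\mathbb{R}^n)}\leq 2\|g\|_{L^\infty(\mathbb{R}^n)}$ for every $g\in L^\infty(\mathbb{R}^n)$. Consequently, the restriction of $\ell$ to the subspace $C_0(\mathbb{R}^n)\subseteq{\rm CMO}_{\L}(\mathbb{R}^n)$ (see (iii) of Theorem~\ref{thm:CMO-dual-known}) satisfies $|\ell(g)|\leq 2\|\ell\|\,\|g\|_{L^\infty}$, and is therefore a bounded linear functional on $(C_0(\mathbb{R}^n),\|\cdot\|_\infty)$. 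Applying the classical Riesz--Markov--Kakutani theorem then produces a unique finite regular Borel measure $\mu_0$ with
\[
\ell(g)=\int_{\mathbb{R}^n}g(x)\,d\mu_0(x),\qquad g\in C_0(\mathbb{R}^n).
\]

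Next, fix $j\in\{1,\ldots,n\}$. The crucial ingredient is Theorem~\ref{thm:Riesz-CMO}, which gives a bounded linear map $R_j^*\colon C_0(\mathbb{R}^n)\to{\rm CMO}_{\L}(\mathbb{R}^n)$. Composing with $\ell$ yields
\[
\bigl|\ell(R_j^*\varphi)\bigr|\leq \|\ell\|\cdot\|R_j^*\varphi\|_{{\rm BMO}_{\L}}\leq C\|\ell\|\,\|\varphi\|_\infty,\qquad \varphi\in C_0(\mathbb{R}^n),
\]
so $\varphi\mapsto\ell(R_j^*\varphi)$ is again a bounded linear functional on $(C_0(\mathbb{R}^n),\|\cdot\|_\infty)$. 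A second application of Riesz--Markov produces a unique finite Borel measure $\nu_j$ with $\ell(R_j^*\varphi)=\int\varphi\,d\nu_j$ for every $\varphi\in C_0(\mathbb{R}^n)$.

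The third step is to identify $\nu_j$ with $R_j(d\mu_0)$. For any test function $\varphi\in C_c^\infty(\mathbb{R}^n)$,
\[
\int_{\mathbb{R}^n}\varphi(y)\,d\nu_j(y)=\ell(R_j^*\varphi)=\int_{\mathbb{R}^n}R_j^*\varphi(x)\,d\mu_0(x).
\]
Recalling $R_j^*\varphi(x)=\lim_{\varepsilon\to 0}\int_{|y-x|>\varepsilon}R_j(y,x)\varphi(y)\,dy$, I would truncate at scale $\varepsilon$, apply Fubini (legitimate because on $\{|y-x|>\varepsilon\}$ the kernel is bounded, $\varphi$ is compactly supported, and $\mu_0$ is finite), and then pass to the limit using the uniform boundedness of $R_j^*\varphi$ together with the continuity of $R_j^*\varphi$ guaranteed by Theorem~\ref{thm:Riesz-CMO}. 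This shows that the iterated integral $\int\varphi(y)\bigl(\int R_j(y,x)\,d\mu_0(x)\bigr)dy$ coincides with $\int\varphi\,d\nu_j$, so the distribution $R_j(d\mu_0)$ is represented by the finite Borel measure $\nu_j$.

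Uniqueness of $\mu_0$ is built in from the first application of Riesz--Markov. Finally, since $C_0(\mathbb{R}^n)$ is dense in ${\rm CMO}_{\L}(\mathbb{R}^n)$ in the ${\rm BMO}_{\L}$ norm, the functional $g\mapsto\int g\,d\mu_0$, initially defined on $C_0(\mathbb{R}^n)$ and bounded with respect to $\|\cdot\|_{{\rm BMO}_{\L}}$, has a unique continuous extension to ${\rm CMO}_{\L}(\mathbb{R}^n)$, which must coincide with $\ell$. The main obstacle I anticipate is the Fubini/principal-value identification in the third step: because $R_j(x,y)$ is singular along the diagonal and $\mu_0$ need not have any absolute continuity, justifying that the distributional pairing $\langle R_j\mu_0,\varphi\rangle$ genuinely equals the iterated integral requires careful use of the annular kernel bounds of Shen~\cite{Shen1} together with the continuity of $R_j^*\varphi$ on $\mathbb{R}^n$ established in Section~\ref{sec:Riesz}.
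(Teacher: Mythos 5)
Your proposal is correct and follows essentially the same route as the paper: restrict $\ell$ to $C_0(\mathbb{R}^n)$ via $\|g\|_{{\rm BMO}_{\L}}\leq 2\|g\|_{L^\infty}$, apply the Riesz representation theorem twice (once to $\ell$ and once to $\varphi\mapsto\ell(R_j^*\varphi)$, which is bounded on $C_0$ by Theorem~\ref{thm:Riesz-CMO}), and then identify the two measures by testing against $C_c^\infty$ using $R_j^*(C_c^\infty)\subseteq C_0$. Your explicit attention to the Fubini/principal-value justification in the identification step is in fact more careful than the paper, which records that exchange as a one-line equality.
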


\begin{proof}
Given  $\ell\in ({\rm CMO}_{\L})^*$, then there exists a constant $c>0$ such that
$$
    |\ell(g)|\leq c\|g\|_{{\rm BMO}_{\L}} \quad    {\rm for \     \   \     all     } \  \    \     g\in {\rm CMO}_{\L}.
$$
Notice that ${\rm CMO}_{\L}(\mathbb R^n)$ is the closure of $C_0(\mathbb{R}^n)$ in the ${\rm BMO}_{\L}(\mathbb R^n)$ norm and the space $C_0(\mathbb{R}^n)$ is equipped with the supremum norm, clearly for each $\ell\in ({\rm CMO}_{\L}(\mathbb R^n))^*$,
$$
      |\ell(g)|\leq c\|g\|_{{\rm BMO}_{\L}}\leq 2c \|g\|_{L^\infty} \quad    {\rm for \     \   \     all     } \  \    \     g\in C_0(\mathbb{R}^n).
$$
That is,  $\ell$ is also a bounded linear functional on $C_0(\mathbb{R}^n)$. Hence it follows from the Riesz representation theorem (see \cite[Section~6.19]{Ru} for instance)  that
 there exists a uniquely regular (complex-valued)  Borel measure $\mu_0$ whose total variation $|\mu_0|(\mathbb{R}^n)<\infty$, such that
\begin{equation}\label{eqn:dual-CMO-aux-1}
    \ell(g)=\int_{\mathbb{R}^n}  g(x)\,d\mu_0(x)=:\mu_0(g)  \quad    {\rm for \     \   \     all     } \  \    \     g\in C_0(\mathbb{R}^n).
\end{equation}
In turn, since $C_0(\mathbb{R}^n)$ is dense in ${\rm CMO}_{\L}(\mathbb R^n)$,  the linear functional $\mu_0$ given by \eqref{eqn:dual-CMO-aux-1} initially defined on $C_0(\mathbb{R}^n)$ has a unique extension to ${\rm CMO}_{\L}(\mathbb R^n)$. Thus every  $\ell\in ({\rm CMO}_{\L}(\mathbb R^n))^*$ can be realized by  a uniquely finite Borel measure $\mu_0$. In the sequel we fix such $\ell$ and $\mu_0$.

Moreover, It follows from combining \eqref{eqn:dual-CMO-aux-1} and $\ell\in ({\rm CMO}_{\L}(\mathbb R^n))^*$ that
\begin{equation}\label{eqn:dual-CMO-estimate}
   |\mu_0(g)| =|\ell(g)|\leq c \|g\|_{{\rm BMO}_{\L}}  \quad    {\rm for \     \   \     all     } \  \    \     g\in C_0(\mathbb{R}^n).
\end{equation}
We aim to characterize properties of the measure  $\mu_0$ from the perspective of Riesz transforms, motivated by the analogous result for the Laplacian operator in place of $\L$.

To this end, note  that  the linear operator $R_j^*:  \, C_0 (\mathbb{R}^n) \to {\rm CMO}_{\L}(\mathbb R^n)$ is bounded by Theorem~\ref{thm:Riesz-CMO}, and $C_0(\mathbb{R}^n)$ and ${\rm CMO}_{\L}(\mathbb R^n)$ are both Banach spaces, 
 so  the operator $R_j$,  as the adjoint of $R_j^*$, satisfies
$$
    R_j\left(  \left({\rm CMO}_{\L}\right)^*\right)  \subseteq (C_0)^*.
$$
Alternatively, the above inclusion can be deduced by recalling that $R_j:\, H_{\L}^1 = \left({\rm CMO}_{\L}\right)^* \to L^1$ is bounded (see \cite{DZ} for instance).
Hence $R_j(\ell)$ is a  bounded linear functional on  $C_0$ by means of 
\begin{equation}\label{eqn:dual-CMO-Riesz-aux-2}
    \langle R_j(\ell),\, g\rangle=\langle \ell, R_j^*(g)\rangle
     =\ell(R_j^*(g))   \quad    {\rm for \     \   \     all     } \  \    \     g\in C_0(\mathbb{R}^n).
\end{equation}
This, combined with  $R_j^*(C_c^\infty)\subseteq C_0$ (i.e., \eqref{eqn:Riesz-Compact} in the proof of  Theorem~\ref{thm:Riesz-CMO}) and the representation \eqref{eqn:dual-CMO-aux-1}, implies that
$$
   \ell(R_j^*(\phi)) = \mu_0(R_j^*(\phi)) =\langle R_j(\mu_0),\, \phi\rangle
      \quad    {\rm for \     \   \     all     } \  \    \     \phi\in C_c^\infty(\mathbb{R}^n). 
$$
That is,
\begin{equation}\label{eqn:dual-CMO-Riesz-aux-3}
  \int_{\mathbb{R}^n} R_j^* (\phi)(x)\, d\mu_0(x) 
    =  \int_{\mathbb{R}^n} \left[\int_{\mathbb{R}^n} R_j(x,y)\, d\mu_0(y)\right]  \phi(x)   \, dx \quad    {\rm for \     \   \     all     } \  \    \     \phi\in C_c^\infty(\mathbb{R}^n). 
\end{equation}

On the other hand, since $R_j(\ell)$ is  a bounded linear functional on $C_0(\mathbb{R}^n)$, by the Riesz representation theorem again,  there exists a finite Borel measure $\mu_j$ such that
$$
    \langle R_j(\ell)  ,g\rangle=\int_{\mathbb{R}^n}  g(x)\, d\mu_j(x)  \quad    {\rm for \     \   \     all     } \  \    \     g\in C_0(\mathbb{R}^n).
$$
In particular, using $R_j^*(C_c^\infty)\subseteq C_0$ again,
\begin{align*}
	   \int_{\mathbb{R}^n}\phi(x)\, d\mu_j(x)    &= \langle R_j(\ell)  ,\phi\rangle= \ell(R_j^*(\phi))=\int_{\mathbb{R}^n} R_j^* (\phi)(x)\, d\mu_0(x)  \quad    {\rm for \     \   \     all     } \  \    \     \phi\in C_c^\infty(\mathbb{R}^n). 
\end{align*}
This, together with \eqref{eqn:dual-CMO-Riesz-aux-3}, deduces that
$$
    \int_{\mathbb{R}^n} \phi(x) \left[\int_{\mathbb{R}^n} R_j(x,y)\, d\mu_0(y)\right]dx =\int_{\mathbb{R}^n}\phi(x)\, d\mu_j(x)   \quad    {\rm for \     \   \     all     } \  \    \     \phi\in C_c^\infty(\mathbb{R}^n). 
$$

Then a standard argument by contradiction shows that 
\begin{equation}\label{eqn:dual-CMO-Riesz-aux-4}
   R_j(d\mu_0)(x) \, dx=d\mu_j(x),\   \   {\rm i.e.,} \  \   \mu_j=R_j(d\mu_0).
\end{equation}

Therefore, for any given $\ell\in ({\rm CMO}_{\L})^*$, it can be realized by a   uniquely finite Borel measure $\mu_0$, whose Riesz transforms $R_j(\mu_0)$ for $j=1,2,\ldots, n$ are all finite Borel measures.  The proof is end.
\end{proof}

\medskip

\begin{remark}  \textbf{(Riesz transforms and subharmonicity)}\\
(i).
When $V\equiv 0$,  then ${\rm CMO}_{\L}(\mathbb R^n)={\rm CMO}_{-\Delta}(\mathbb R^n)={\rm CMO}(\mathbb R^n)$ is known by (ii) of Theorem~\ref{thm:CMO-dual-known} (since ${\rm CMO}(\mathbb R^n)$ is the closure of $C_c^\infty(\mathbb R^n)$ in the ${\rm BMO}(\mathbb R^n)$ norm).  In this case, Lemma~\ref{lem:Riesz-CMO-2} says that every continuous linear functional on ${\rm CMO}$ can be realized by a finite measure $\mu_0$ whose classical Riesz transforms $R_j^0(d\mu_0)$ for $j=1,\ldots, n$ are all finite measures. 

Hence, it follows from the F. and M. Riesz theorem (see Corollary 1 in \cite[p. 221]{St1} for instance) that  there exists  a function $f\in H^1(\mathbb{R}^n)$ such that $d\mu_0(x)=f(x)dx$, where $H^1(\mathbb{R}^n)$  is the classical Hardy space.

That is, we obtain 
$
     ({\rm CMO}(\mathbb{R}^n))^*\subseteq H^1(\mathbb{R}^n),
$ 
as a straightforward consequence of Lemma~\ref{lem:Riesz-CMO-2} by taking $V\equiv 0$. Note that the reverse inclusion $H^1(\mathbb{R}^n) \subseteq  ({\rm CMO}(\mathbb{R}^n))^*$ is trivial by combining $(H^1(\mathbb{R}^n) )^*={\rm BMO}(\mathbb{R}^n)$ and ${\rm CMO}(\mathbb{R}^n)\subsetneqq {\rm BMO}(\mathbb{R}^n)$. Hence Lemma~\ref{lem:Riesz-CMO-2} implies the classical well-known result (see  \cite[Proposition 3.5]{Ch}  for instance)
$$
     ({\rm CMO}(\mathbb{R}^n))^*= H^1(\mathbb{R}^n).
$$

Notably, we remind  that a crucial ingredient to show  the F. and M. Riesz theorem is the subharmonicity of $|F|^p$ for $p\geq (n-1)/n$, where 
$$
     F(x,t)=\left(e^{-t\sqrt{-\Delta}} (d\mu_0)(x),e^{-t\sqrt{-\Delta}} \left(R_1^0(d\mu_0)\right)(x),\ldots, e^{-t\sqrt{-\Delta}} \left(R_n^0(d\mu_0)\right)(x)  \right), \   \  (x,t)\in \mathbb{R}_+^{n+1},
$$
and the subharmonicity follows from the fact that $F(x,t)$ satisfies the generalized Cauchy-Riemann equations; see $\S 3$ in Chapter VII of \cite{St1} or $\S 4$ in Chapter III of \cite{St2} for details.

\medskip

(ii). Let $\mu_0$ be the finite measure in Lemma~\ref{lem:Riesz-CMO-2} and $\mu_j=R_j(d\mu_0)$ for $j=1,\ldots, n$. Let
$$
    u_j(x,t):=e^{-t\sqrt{\L}} (d\mu_j)(x)=\int_{\mathbb{R}^n} \mathcal{P}_t(x,y)\, d\mu_j(y),   \   \     i=0,1,\ldots, n,
$$
be the Poisson-Stieltjes integral of the finite Borel measure $\mu_j$. By using estimates for the Poisson kernels associated to  the semigroup $e^{-t\sqrt{\L}}$ given in \cite[Lemma~2.6]{SW},
it's clear that  each $u_j$ is  continuous  in $\mathbb{R}_+^{n+1}$  and 
$$
   \sup_{t>0}  \int_{\mathbb{R}^n}  |u_j(x,t)|\, dx\leq C\,  |\mu_j|(\mathbb{R}^n)<\infty.
$$
Obviously,  $u_j$ is an $\mathbb{L}-$harmonic function associated to the operator $\mathbb{L}=-\partial_{tt}+\L$ in the sense of 
$$
     \int_{\mathbb{R}_+^{n+1}} \nabla u_j\cdot \nabla \psi \, dY+\int_{\mathbb{R}_+^{n+1}}  V u_j\,\psi \, dY=0,\quad  \forall \, \psi\in C_0^1(\mathbb{R}_+^{n+1}),
$$
where $\nabla=(\nabla_x, \,\partial_t)$, and the  capital letter $Y=(y,t)$  denotes a point in $\mathbb R_+^{n+1}$

Moreover, we now give an extension of Lemma~2.6 in \cite{DYZ} that the index $p\geq 1$ therein can be extended to $p>0$:
for any $B(Y,4r)\subseteq \mathbb{R}_+^{n+1}$,
\begin{equation}\label{eqn:subharmonic}
    \sup_{(x,t)\in B(Y,r/2)} |u_j(x,t)|^p \leq  \frac{ c_p}{|B(Y,r)|} \int_{B(Y,r)}|u_j(x,t)|^{p} dx \,dt   \quad {\rm for}\  \  p>0.
\end{equation}
To this end, we claim that 
\begin{itemize}
	\item [\textbf{(F)}] for each $j=0,1,\ldots, n$,  $|u_j (x,t)|^2$ is a non-negative sub-harmonic function in $\mathbb{R}_+^{n+1}$. 
\end{itemize}

\smallskip

Let ${\rm Re\,}z$ and $\overline{z}$ be the real part and the complex conjugate  of $z\in \mathbb{C}$, respectively. Let $\langle \mathbf{z}, \mathbf{w}\rangle =\sum_{j=1}^{n+1} z_j\, \overline{w_j}$ for $\mathbf{z}=(z_1,\ldots, z_{n+1}),\,   \mathbf{w}=(w_1,\ldots, w_{n+1})\in \mathbb{C}^{n+1}$.

For any given $Y\in \mathbb{R}_+^{n+1}$ and $B=B(Y,4r)\subseteq \mathbb{R}_+^{n+1}$,  let $\varphi\geq 0$ be a Lipschitz function satisfying ${\rm supp}\, \varphi\subseteq B$, we have
\begin{align*}
	\iint_{B} \langle \nabla_{x,t}\, |u_j|^2 , \nabla _{x,t} \,\varphi \rangle\, dxdt &= 2 \iint_{B}   \langle    {\rm Re\,} \big( \overline{u_j} \,\nabla_{x,t}\, u _j\big), \nabla _{x,t} \,\varphi\rangle   \,  dx\,dt\\
	&=
	  2{\rm Re\,}  \iint_{B}   \langle \nabla_{x,t}\, u_j , \nabla _{x,t} \big(u_j\varphi\big) \rangle\, dx\,dt -2 {\rm Re\,} \iint_{B}     \langle \nabla_{x,t}\, u_j , \varphi\nabla _{x,t} \, u_j \rangle\, d x\,dt\\
	&=  -2{\rm Re\,} \iint_{B}   \big(\Delta_{x,t}\, u_j\big)  \overline{u_j} \,\varphi \, dx\,dt  -2 \iint_B |\nabla_{x,t}u_j|^2 \varphi \, dx\,dt\\
	&= -2\iint_B V |u_j|^2  \varphi \ dx\,dt -2 \iint_B |\nabla_{x,t}u_j|^2 \varphi \, dx\,dt\\
	&\leq 0.
\end{align*}
Hence $|u_j(x,t)|^2$ is weakly subharmonic, and so the claim \textbf{(F)} holds by Problem~2.8 in \cite[p. 29]{GT}. This allows us to apply Theorem~5.4 in \cite{BM} to see for every $p>0$, 
$$
    \sup_{(x,t)\in B(Y,r/2)} |u_j(x,t)|^2 \leq c_p \left( \frac{1}{|B(Y,r)|} \int_{B(Y,r)}|u_j(x,t)|^{2p} dx\, dt \right)^{1/p},
$$
where $c_p<\infty$ is a positive constant depending on $p$. As a consequence,  \eqref{eqn:subharmonic} follows readily. Indeed, one may verify that the function $u_j$ in \eqref{eqn:subharmonic} can be replaced by any $\mathbb{L}-$harmonic function in the ball $B(Y,4r)$.

 Furthermore, let 
$$
     F_{\L}(x,t)=\big(u_0(x,t),u_1(x,t),\ldots, u_n(x,t)\big)
$$
and $|F_{\L}(x,t)|^2=\sum_{j=0}^n |u_j(x,t)|^2$. The argument above shows that $|F_{\L}(x,t)|^2$ is a non-negative sub-harmonic function in $\mathbb{R}_+^{n+1}$ and 
$$
    \sup_{(x,t)\in B(Y,r/2)} |F_{\L}(x,t)|^p \leq  \frac{ c_p}{|B(Y,r)|} \int_{B(Y,r)}|F_{\L}(x,t)|^{p} dx\, dt   \quad {\rm for}\  \  p>0.
$$

  It's natural to ask whether or not we can establish the subharmonicity of $|F_{\L}|^p$ for some $p\leq 1$, by noticing the generalized Cauchy--Riemann equations are now no longer satisfied. Furthermore, it's interesting to  consider the possibility of 
establishing an analogous version of the F. and M. Riesz theorem associated to $\L$ such that the finite measure $\mu_0$ in Lemma~\ref{lem:Riesz-CMO-2}  must be absolutely continuous with Radon-Nikodym derivative in $H_{\L}^1(\mathbb{R}^n)$, that is, there exists $f\in H_{\L}^1(\mathbb{R}^n)$ such that $d\mu_0(x)=f(x)\,dx$.
\end{remark}

 \medskip

\section{An approximation to the identity and ${\rm CMO}_{\L}(\mathbb R^n)$: proof of Theorem  \ref{thm:approx}} \label{sec:approx}
\setcounter{equation}{0}

In the end, we turn to consider an approximation to the identity
 arising from the semigroups associated to $\L$. 
 
 Actually, this is not a trivial fact, since the standard approximation to the identity can not match ${\rm CMO}_{\L}(\mathbb R^n)$ well due to the potential $V$. 
 Even for a radial bump function $\phi$ satisfying
$$ 
   {\rm supp}\, \phi\subseteq B(0,1),\quad        0\leq \phi\leq 1  \quad {\rm and }\quad \int \phi(x)\, dx=1,
$$ 
 the convolution $A_tf=t^{-n}\phi(\cdot/t)*f $ for $f\in {\rm CMO}_{\L}(\mathbb R^n)$ satisfies  $\widetilde{\gamma}_1(A_t (f))=\widetilde{\gamma}_2(A_t (f))=\widetilde{\gamma}_3(A_t (f))=\widetilde{\gamma}_4(A_t (f))=0$, while the remaining $\widetilde{\gamma}_5(A_t (f))=0$ needs furthermore conditions on $f$ such as  compact support; see \cite[Lemma 4.1]{SW}. This means that the usual average of  a ${\rm CMO}_{\L}$ function may not fall into ${\rm CMO}_{\L}$, which is quite different from the standard identity approximation in the classical ${\rm CMO}$ space and the ${\rm CMO}_{-\Delta + 1}$ space (see \cite{D}).

However, we will see that the limit behavior of the Poisson integral of $f\in {\rm CMO}_{\L}$ also possesses nice approximate properties, i.e., Theorem  \ref{thm:approx}. The argument is also workable for the heat semigroups.

To show Theorem  \ref{thm:approx}, we introduce the following auxiliary result first.

\begin{lemma}\label{lem:Poisson-CMO}
Suppose $V\in RH_q$ for some $q\geq n/2$ and  let $\L=-\Delta+V$.  
There exists a constant $C>0$ such that 
  for any given $s>0$ and $f\in {\rm BMO}_{\L}(\mathbb R^n)$, we have $e^{-s\sqrt{\L }}  f\in {\rm BMO}_{\L}(\mathbb R^n)$, and 
\begin{equation}\label{eqn:semigroup-1}
     \left\|e^{-s \sqrt{\L}} f\right\|_{{\rm BMO}_{\L}(\mathbb R^n)}\leq C 
     \|f\|_{{\rm BMO}_{\L}(\mathbb R^n)}.
\end{equation}
 
Additionally, if $f\in {\rm CMO}_{\L}(\mathbb R^n)$, then  $e^{-s\sqrt{\L} }  f$ belongs to ${\rm CMO}_{\L}(\mathbb R^n)$ as well.
\end{lemma}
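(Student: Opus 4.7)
The plan is to prove the two assertions of Lemma~\ref{lem:Poisson-CMO} in sequence: first the $\mathrm{BMO}_{\mathcal{L}}$ boundedness \eqref{eqn:semigroup-1}, then the preservation of $\mathrm{CMO}_{\mathcal{L}}$.

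For the $\mathrm{BMO}_{\mathcal{L}}$ bound, I would rely on the pointwise Poisson kernel estimates recorded in \cite[Lemma~2.6]{SW}, which give
\begin{equation*}
\mathcal{P}_s(x,y) \le C_N \, \frac{s}{(s+|x-y|)^{n+1}} \left(1 + \frac{s+|x-y|}{\rho(x)}\right)^{-N}
\end{equation*}
for every $N>0$, together with the contraction property $e^{-s\sqrt{\mathcal{L}}} 1 \le 1$. For a ball $B = B(x_B, r_B)$ and $f \in \mathrm{BMO}_{\mathcal{L}}$, I would split $f = (f - f_{2B}) \mathsf{1}_{2B} + (f - f_{2B}) \mathsf{1}_{(2B)^c} + f_{2B}$ when $r_B < \rho(x_B)$, and $f = f\mathsf{1}_{2B} + f\mathsf{1}_{(2B)^c}$ when $r_B \ge \rho(x_B)$. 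Local pieces are dispatched using $L^2$-boundedness of $e^{-s\sqrt{\mathcal{L}}}$ and the John--Nirenberg-type inequality quoted in Section~\ref{sec:HL}; nonlocal pieces are handled by telescoping into annuli $2^j B$ and summing the kernel's $\rho$-decay factor against the logarithmic growth of $\mathrm{BMO}_{\mathcal{L}}$-averages, with the critical-radius $L^1$-bound supplying the required summability whenever $r_B \ge \rho(x_B)$.

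For the $\mathrm{CMO}_{\mathcal{L}}$ part, I would invoke the tent-space characterization from \cite{SW}: $f \in \mathrm{CMO}_{\mathcal{L}}$ iff the Carleson-type measure $d\mu_f(y,t) = |t\, \partial_t e^{-t\sqrt{\mathcal{L}}} f(y)|^2 \, t^{-1}\, dy\, dt$ satisfies three vanishing conditions, corresponding respectively to $\widetilde{\gamma}_1$ (small-scale), $\widetilde{\gamma}_3$ (spatial infinity), and $\widetilde{\gamma}_5$ (critical-scale spatial infinity). Using the semigroup law, for $g = e^{-s\sqrt{\mathcal{L}}} f$ we have
\begin{equation*}
t\, \partial_t\, e^{-t\sqrt{\mathcal{L}}} g(y) = \frac{t}{t+s} \cdot (t+s)\, \partial_\tau e^{-\tau\sqrt{\mathcal{L}}} f(y)\Big|_{\tau = t+s},
\end{equation*}
so $d\mu_g$ is dominated by the image of $d\mu_f$ under the time shift $\tau = t+s$, multiplied by the bounded factor $(t/(t+s))^2 \le 1$. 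This shift merely restricts Carleson boxes to those of height $\ge s$, hence all three vanishing conditions transfer from $f$ to $g$, yielding $g \in \mathrm{CMO}_{\mathcal{L}}$.

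The main obstacle will lie in the critical-radius conditions, both in the BMO estimate (summing $\rho$-decay weights over dyadic annuli when $r_B \ge \rho(x_B)$ requires careful use of the slowly-varying property of $\rho$ from Lemma~\ref{lem:size-rho}) and in the CMO step (the $\widetilde{\gamma}_5$-type vanishing Carleson condition concerns balls of radius $\ge \rho(x_B)$ with $x_B$ far from the origin, so after the time shift one must verify that averages on such boxes still tend to zero). A secondary technical point is to pin down the precise tent-space encoding from \cite{SW} that separates the three vanishing conditions, rather than treating them as a single limit, so that each can be transferred independently under the shift $\tau = t+s$.
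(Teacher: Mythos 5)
Your $\mathrm{BMO}_{\L}$ step takes a genuinely different route from the paper: you argue directly with the Poisson kernel bounds (local/nonlocal splitting plus John--Nirenberg), in the spirit of \cite{DGMTZ}, whereas the paper never touches the kernel at this stage and instead proves $\mathfrak{C}\big(t\sqrt{\L}e^{-t\sqrt{\L}}e^{-s\sqrt{\L}}f\big)(x)\leq C\,\mathfrak{C}\big(t\sqrt{\L}e^{-t\sqrt{\L}}f\big)(x)$ and invokes the Carleson-measure characterization of $\mathrm{BMO}_{\L}$. Your route is workable, but note that in the subcritical case $r_B<\rho(x_B)$ your decomposition leaves the term $f_{2B}\,e^{-s\sqrt{\L}}1$, which is \emph{not} constant; its oscillation over $B$ must be shown to beat the logarithmic growth $|f_{2B}|\lesssim \|f\|_{{\rm BMO}_{\L}}\big(1+\log(\rho(x_B)/r_B)\big)$, which requires interpolating between the H\"older continuity of the kernel (giving $(r_B/s)^{\delta}$) and the bound $1-e^{-s\sqrt{\L}}1\lesssim (s/\rho)^{\delta}$. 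This is standard but is precisely where the work lies; it should be made explicit.

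The $\mathrm{CMO}_{\L}$ step, however, has a genuine gap. Your claim that the time shift $\tau=t+s$ ``merely restricts Carleson boxes to those of height $\geq s$, hence all three vanishing conditions transfer'' fails whenever $s>r_B$, which is exactly the regime relevant to $\eta_1$ (small balls, $s$ fixed). The shifted region $B\times(s,s+r_B)$ is not contained in any Carleson box over a ball comparable to $B$: to cover it you must pass to a ball of radius $\geq s+r_B$, and the normalization mismatch $\big((s+r_B)/r_B\big)^n$ is unbounded as $r_B\to 0$. Worse, after the shift all heights are $\geq s$, so the small-scale vanishing of $d\mu_f$ gives no information at all; the smallness of $\eta_1(F_s)$ cannot ``transfer'' from $f$ and must instead be extracted from the damping factor $t/(t+s)\leq t/s$, which you discard as merely ``bounded by $1$.'' This is how the paper proceeds: for $s>r_B$ it writes $F_s(y,t)=e^{-s\sqrt{\L}}\big(t\sqrt{\L}e^{-t\sqrt{\L}}f\big)(y)$, uses the kernel of $e^{-s\sqrt{\L}}$ as a spatial average over balls $B(x,2^k s)$ of radius comparable to $s$ (so that the Carleson bound on those large balls applies with the correct normalization), and for $\eta_1$ harvests the factor $\sqrt{r_{B'}/s}$ from $\int_0^{r_{B'}}\frac{t}{t+s/2}\,\frac{dt}{t}$. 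A similar splitting of the dyadic sum at a threshold $N_\varepsilon$ is needed for $\eta_3$, since the averaging balls $B(x_{B'},2^{k+1}s)$ with $k$ large re-enter the origin region. Your argument as written establishes the transfer only in the easy case $s\leq r_B$ and therefore does not prove $\eta_1(F_s)=0$.
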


For any given $s>0$, it's clear that $\left|e^{-s\sqrt{\L}}f(x)\right|\leq C Mf(x)$. However, this, combined with Theorem  \ref{thm:M-CMO},  can not be used to deduce the ${\rm BMO}_{\L}(\mathbb R^n)$ norm of $e^{-s\sqrt{\L}}f$.
To prove Lemma \ref{lem:Poisson-CMO}, we apply the characterization of ${\rm CMO}_{\L}(\mathbb R^n)$
  in terms of tent spaces.

\begin{theorem}\label{thm:tent-CMO}
(see \cite[Theorem B]{SW}) 
Suppose $V\in {\rm RH}_q$ for some $q\geq  n/2$.
Then $f\in {\rm CMO}_{\L}$ if and only if  $f\in L^2(\mathbb{R}^n, (1+|x|)^{-(n+\beta)}dx)$ for some $\beta>0$ and $t\sqrt{\L}e^{-t\sqrt{\L}} f\in T_{2,C}^\infty$, with
$$
    \|f\|_{{\rm BMO}_{\L}}\approx \left\|  t\sqrt{\L}e^{-t\sqrt{\L}}f \right\|_{T_2^\infty}.
$$
\end{theorem}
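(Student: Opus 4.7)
The plan is to prove the two assertions via a duality reduction followed by a density argument, relying only on the $H^1_\L$-${\rm BMO}_\L$ duality in \eqref{eqn:duality}, standard Poisson kernel bounds, and the density characterization (iii) of Theorem \ref{thm:CMO-dual-known}.

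For the ${\rm BMO}_\L$-bound, the key observation is that $e^{-s\sqrt{\L}}$ is a uniform contraction on $H^1_\L$: for $h \in H^1_\L$ and $s > 0$,
$$
\mathcal{P}^*\bigl(e^{-s\sqrt{\L}}h\bigr)(x) = \sup_{t>0}\bigl|e^{-(t+s)\sqrt{\L}}h(x)\bigr| = \sup_{\tau>s}\bigl|e^{-\tau\sqrt{\L}}h(x)\bigr|\leq \mathcal{P}^*h(x),
$$
whence $\|e^{-s\sqrt{\L}}h\|_{H^1_\L} \leq \|h\|_{H^1_\L}$ uniformly in $s$. Since the Poisson semigroup is self-adjoint on $L^2$ and its kernel $\mathcal P_s(x,y)$ is nonnegative, Fubini yields $\int(e^{-s\sqrt{\L}}f)\,h\,dx = \int f\,(e^{-s\sqrt{\L}}h)\,dx$ for $f\in{\rm BMO}_\L$ tested against a dense class of $h \in H^1_\L$. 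Dualising through \eqref{eqn:duality} then gives $\|e^{-s\sqrt{\L}}f\|_{{\rm BMO}_\L}\leq \|f\|_{{\rm BMO}_\L}$, which is \eqref{eqn:semigroup-1}.

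For the second assertion I would combine the bound just obtained with (iii) of Theorem \ref{thm:CMO-dual-known}: ${\rm CMO}_\L$ is the ${\rm BMO}_\L$-closure of $C_0(\mathbb R^n)$, and since $C_0 \subseteq L^\infty \subseteq {\rm BMO}_\L$ the inclusion $C_0 \subseteq {\rm CMO}_\L$ holds trivially. Thus it suffices to verify $e^{-s\sqrt{\L}}(C_0)\subseteq C_0$: continuity of $e^{-s\sqrt{\L}}\phi$ follows from joint continuity of $\mathcal P_s(\cdot,y)$ in its first argument together with dominated convergence, while vanishing at infinity follows by splitting $\phi = \phi\,\mathsf 1_{B(0,R)} + \phi\,\mathsf 1_{B(0,R)^c}$ (with $R$ chosen so $\|\phi\,\mathsf 1_{B(0,R)^c}\|_\infty$ is small) and using the pointwise bound $\mathcal P_s(x,y) \lesssim s(s+|x-y|)^{-(n+1)}$ inherited from Feynman--Kac domination by the classical Poisson kernel. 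Granted this, for $f\in{\rm CMO}_\L$ one picks $\phi_n\in C_0$ with $\phi_n\to f$ in ${\rm BMO}_\L$; the ${\rm BMO}_\L$ bound gives $e^{-s\sqrt{\L}}\phi_n \to e^{-s\sqrt{\L}}f$ in ${\rm BMO}_\L$, each $e^{-s\sqrt{\L}}\phi_n\in C_0\subseteq{\rm CMO}_\L$, and closedness of ${\rm CMO}_\L$ in ${\rm BMO}_\L$ forces $e^{-s\sqrt{\L}}f\in{\rm CMO}_\L$.

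The main technical point to watch is the well-definedness of $e^{-s\sqrt{\L}}f$ as a locally integrable function for arbitrary $f\in{\rm BMO}_\L$, so that the duality identity $\int(e^{-s\sqrt{\L}}f)h = \int f(e^{-s\sqrt{\L}}h)$ actually makes sense and coincides with the functional produced by $(H^1_\L)^*$. This relies on the Schr\"odinger-improved decay
$$
\mathcal P_s(x,y)\leq C_N\frac{s}{(s+|x-y|)^{n+1}}\Bigl(1+\frac{s+|x-y|}{\rho(x)}\Bigr)^{-N}
$$
(valid for every $N\ge 0$) to absorb the polynomial growth of ${\rm BMO}_\L$ functions. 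Beyond this kernel bookkeeping, no substantive obstacle is expected: the argument is a one-line duality reduction plus the soft density closure described above.
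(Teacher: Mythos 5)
Your proposal does not address the statement in question. Theorem \ref{thm:tent-CMO} asserts a \emph{tent-space} characterization of ${\rm CMO}_{\L}$: that $f\in{\rm CMO}_{\L}$ is equivalent to the quadratic Carleson condition $t\sqrt{\L}e^{-t\sqrt{\L}}f\in T^{\infty}_{2,C}$ together with the weighted $L^2$ membership, and that $\|f\|_{{\rm BMO}_{\L}}\approx \|t\sqrt{\L}e^{-t\sqrt{\L}}f\|_{T^\infty_2}$. Nothing in your argument touches the functional $\mathfrak C$, the vanishing conditions $\eta_1,\eta_2,\eta_3$, or the tent space $T^\infty_{2,C}$ at all. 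What you have written instead is a proof of Lemma \ref{lem:Poisson-CMO}, namely the uniform ${\rm BMO}_{\L}$-boundedness of $e^{-s\sqrt{\L}}$ and the invariance of ${\rm CMO}_{\L}$ under that semigroup; indeed you refer explicitly to \eqref{eqn:semigroup-1}, which lives in that lemma, not in the stated theorem. The paper does not prove Theorem \ref{thm:tent-CMO} at all: it is quoted verbatim from \cite[Theorem~B]{SW}, and the paper then uses it as a black box to establish Lemma \ref{lem:Poisson-CMO}. So there is no internal proof here to compare against, and your write-up cannot serve as one because it establishes a logically unrelated statement.

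For what it is worth, read as a proof of Lemma \ref{lem:Poisson-CMO}, your approach is genuinely different from the paper's and is morally sound: the paper transfers the question to the tent-space functional via Theorem \ref{thm:tent-CMO} and verifies $\eta_i(F_s)=0$ by hand, whereas you dualize against $H^1_{\L}$ using the pointwise semigroup domination $\mathcal P^*(e^{-s\sqrt{\L}}h)\le \mathcal P^* h$, then close ${\rm CMO}_{\L}$ under $e^{-s\sqrt{\L}}$ by showing $e^{-s\sqrt{\L}}(C_0)\subseteq C_0$ and invoking density plus completeness. That route is shorter and avoids tent spaces entirely, but it buries the real work in two places you only gesture at: (i) justifying that the $L^2$-adjointness identity $\int (e^{-s\sqrt{\L}}f)h=\int f(e^{-s\sqrt{\L}}h)$ is absolutely convergent and coincides with the abstract $(H^1_{\L})^*$-pairing for a class of $h$ dense in $H^1_{\L}$ (this needs the improved kernel decay you quote together with the logarithmic growth bound for $f\in{\rm BMO}\supseteq{\rm BMO}_{\L}$); and (ii) the constant in \eqref{eqn:semigroup-1} is not $1$ but the constant from the norm equivalence $\|\cdot\|_{{\rm BMO}_{\L}}\approx\|\cdot\|_{(H^1_{\L})^*}$, a cosmetic but worth-noting correction. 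None of this, however, touches the theorem you were actually asked to prove.
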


The space $T_2^\infty$ is the class of functions $F$  defined on $\mathbb{R}_+^{n+1}$ for which $\mathfrak{C}(F)\in L^\infty(\mathbb{R}^n)$ and the norm $\|F\|_{T_2^\infty}=\|\mathfrak{C}(F)\|_{L^{\infty}}$, where  
$$
    \mathfrak{C}(F)(x)=\sup_{x\in B} \left(r_B^{-n} \iint_{\widehat{B}} |F(y,t)|^2 \frac{dy\, dt}{t} \right)^{1/2}.
$$
It's well known  from the Carleson measure that $f\in {\rm BMO}_{\L}(\mathbb R^n)$ if and only if $t\sqrt{\L}e^{-t\sqrt{\L}} f\in T_{2}^\infty$.
Moreover, we say $F\in T_{2,C}^\infty$  if $F\in T_{2}^\infty$ and 
\begin{itemize}
	\item [(i)] $\displaystyle
    \eta_1(F):=\lim_{a\to 0}   \sup_{B:\, r_B\leq a} \left(r_B^{-n} \iint_{\widehat{B}} |F(y,t)|^2  \frac{dy\, dt}{t}\right)^{1/2}=0$,
	\smallskip
	\item [(ii)] $\displaystyle \eta_2(F):=\lim_{a\to +\infty}  \sup_{B:\, r_B\geq a} \left(r_B^{-n} \iint_{\widehat{B}} |F(y,t)|^2  \frac{dy\, dt}{t}\right)^{1/2}=0$,
	\smallskip
	\item [(iii)] $\displaystyle \eta_3(F):=\lim_{a\to +\infty}  \sup_{B:\, B\subseteq \left(B(0,a)\right)^c} \left(r_B^{-n} \iint_{\widehat{B}} |F(y,t)|^2  \frac{dy\, dt}{t}\right)^{1/2}=0$,
\end{itemize}
where $\widehat {B}$ is the classical tent of $B$. Clearly, one may replace $\widehat {B}$ by $B\times (0,r_B)$, and 
by a similar argument, one may also characterize ${\rm CMO}_{\L}(\mathbb R^n)$ in terms of the heat semigroup of $\L $ rather than its Poisson counterpart. That is, the condition $t\sqrt{\L}e^{-t\sqrt{\L}} f\in T_{2,C}^\infty$ involved in Theorem \ref{thm:tent-CMO} can be replaced by $F'(y,t):=t^2\L e^{-t^2\L} f\in T_{2,C}^\infty$.  This observation implies that one may verify $e^{-s\L}f\in {\rm CMO}_{\L}$ for any fixed $s>0$
in a similar manner.

\smallskip

\begin{proof}[Proof of  Lemma \ref{lem:Poisson-CMO}]
For any fixed $s>0$,
 let 
$$F_s(y,t):=t\sqrt{\L}e^{-t\sqrt{\L}}e^{-s\sqrt{\L}}(y).
$$

{\it  Step I.}
we claim that $F_s\in T_2^\infty$, that is, $\mathfrak{C}(F_s)\in L^\infty$.

To see it, for any $x\in \mathbb R^n$  and for any ball $B=B(x_B,r_B)$ containing $x$, if $s\leq r_B$, then
\begin{align}\label{eqn:tent-1}
   \left( r_B^{-n} \int_{0}^{r_B}\int_{B} |F_s(y,t)|^2 \frac{dy\, dt}{t}\right)^{1/2}&= \left(  r_B^{-n}\int_0^{r_B}\int_B t|e^{-(t+s)\sqrt{\L}}f(y)|^2 dydt \right)^{1/2}\nonumber\\
    &\leq C  \left( (2r_B)^{-n}\int_0^{2r_B} \int_{2B} |\tau e^{-\tau\sqrt{\L}}f(y)|^2 \frac{dyd\tau}{\tau} \right)^{1/2}\nonumber\\
    &\leq  C\mathfrak{C}\left(t\sqrt{\L}e^{-t\sqrt{\L}}f\right)(x).
\end{align}
Otherwise, $s\geq r_B$, then for any $y\in B$ and $0<t<r_B$,
\begin{align}\label{eqn:tent-2}
   |F_s(y,t)|& =\left|e^{-s\sqrt{\L}}\left(t\sqrt{\L}e^{-t\sqrt{\L}}f\right)(y)\right|\nonumber\\
   &=\left| \left\{ \int_{B(y,s)} +  \sum_{k=1}^\infty \int_{B(y,2^ks)\setminus B(y,2^{k-1}s)} \right\}K_{e^{-\sqrt{\L}}}(y,z) t\sqrt{\L}e^{-t\sqrt{\L}}f (z)dz\right|\nonumber\\
   &\leq C  \sum_{k=0}^\infty \frac{1}{2^k} \left|t\sqrt{\L}e^{-t\sqrt{\L}}f\right|_{B(y,2^ks)}\nonumber\\
   &\leq   C  \sum_{k=0}^\infty \frac{1}{2^k} \left|t\sqrt{\L}e^{-t\sqrt{\L}}f\right|_{B(x,2^{k+1}s)},
\end{align}
then
\begin{align}\label{eqn:tent-3}
&\left( r_B^{-n} \int_{0}^{r_B}\int_{B} |F_s(y,t)|^2 \frac{dy\, dt}{t}\right)^{1/2}\nonumber\\
\lesssim &\sum_{k=0}^\infty  \frac{1}{2^k}  \left( \int_0^{r_B}  \left|t\sqrt{\L}e^{-t\sqrt{\L}}f\right|_{B(x,2^{k+1}s)}^2  \frac{dt}{t}\right)^{1/2}\nonumber\\
\lesssim & \sum_{k=0}^\infty  \frac{1}{2^k}  \left(  \int_0^{r_B} \frac{1}{|B(x,2^{k+1}s)|}\int_{B(x,2^{k+1}s)}\left|t\sqrt{\L}e^{-t\sqrt{\L}}f(z)\right|^2  \frac{dzdt}{t}\right)^{1/2}  \nonumber\\
\leq&\mathfrak{C}\left(t\sqrt{\L}e^{-t\sqrt{\L}}f\right)(x).
\end{align}

Hence, for any $x\in \mathbb R^n$, 
$$
   \mathfrak{C}(F_s)(x)\leq C  \mathfrak{C}\left(t\sqrt{\L}e^{-t\sqrt{\L}}f\right)(x)
$$
and the constant $C>0$ is independent of $x$ and $s>0$.

Due to the characterization of ${\rm BMO}_{\L}(\mathbb R^n)$ via the Carleson measure, we obtain \eqref{eqn:semigroup-1}.

\smallskip

{\it  Step II.} We continue to verify that $\eta_i(F_s)=0$ for $i=1,2,3$, which ensures $e^{-s\sqrt{\L} }  f\in {\rm CMO}_{\L}(\mathbb R^n)$.

For any given $\varepsilon>0$, by $\eta_i(t\sqrt{\L}e^{-t\sqrt{\L}}f)=0$ for $i=1,2,3$, there exist two integers $\mathcal I_{\varepsilon}>>1$ and $\mathcal J_{\varepsilon}>>1$ such that 
 \begin{subequations}
\begin{equation}\label{eqn:Tent-a}
   \sup_{B:\, r_B\leq 2^{-\mathcal I_\varepsilon}} \left(r_B^{-n} \int_0^{r_B}\int_{B} |t\sqrt{\L}e^{-t\sqrt{\L}}f(y)|^2  \frac{dy\, dt}{t}\right)^{1/2}< \varepsilon,
\end{equation}
\begin{equation}\label{eqn:Tent-b}
    \sup_{B:\, r_B\geq  2^{\mathcal J_\varepsilon}}  \left(r_B^{-n} \int_0^{r_B}\int_{B} |t\sqrt{\L}e^{-t\sqrt{\L}}f(y)|^2  \frac{dy\, dt}{t}\right)^{1/2}< \varepsilon,
\end{equation}
\begin{equation}\label{eqn:Tent-c}
     \sup_{B:\, B\subseteq (B(0, 2^{\mathcal J_\varepsilon} ))^c  }  \left(r_B^{-n} \int_0^{r_B}\int_{B} |t\sqrt{\L}e^{-t\sqrt{\L}}f(y)|^2  \frac{dy\, dt}{t}\right)^{1/2}< \varepsilon.
\end{equation}
 \end{subequations}
 
Let's  consider $\eta_1(F_s)$.

For any  ball $B'=B(x_{B'},r_{B'})$ with $ r_{B'}<2^{-\mathcal I_{\varepsilon}-1}$ sufficiently small, 
 if $s\leq r_{B'}$, then  combine \eqref{eqn:tent-1}  and \eqref{eqn:Tent-a} to obtain
 $$
     \left( r_{B’}^{-n} \int_{0}^{r_{B‘}}\int_{B’} |F_s(y,t)|^2 \frac{dy\, dt}{t}\right)^{1/2}\leq C \varepsilon.
 $$
Otherwise, $r_{B'}< s$, then  apply \eqref{eqn:tent-2}  to see
\begin{align*}
   |F_s(y,t)| =\left|e^{-s\sqrt{\L}/2}\left(t\sqrt{\L}e^{-(t+s/2)\sqrt{\L}}f\right)(y)\right|\leq    C  \sum_{k=0}^\infty \frac{1}{2^k} \left|t\sqrt{\L}e^{-(t+s/2)\sqrt{\L}}f\right|_{B(x_{B'},2^k s)},
\end{align*}
and so
\begin{align*}
&\left( r_{B’}^{-n} \int_{0}^{r_{B‘}}\int_{B’} |F_s(y,t)|^2 \frac{dy\, dt}{t}\right)^{1/2}\\
\leq & C \sum_{k=0}^\infty  \frac{1}{2^k}  \left(  \int_0^{r_B} \frac{1}{|B(x_{B'},2^{k+1}s)|}\int_{B(x_{B'},2^{k+1}s)}   \frac{t}{t+s/2}(t+s/2)\left|\sqrt{\L}e^{-(t+s/2)\sqrt{\L}}f(z)\right|^2   dzdt \right)^{1/2}  \\
\leq & C \sqrt{\frac{r_{B'}}{s}}  \sum_{k=0}^\infty  \frac{1}{2^k}  \left(  \int_0^{2^{k+1}s} \frac{1}{|B(x_{B'},2^{k+1}s)|}\int_{B(x_{B'},2^{k+1}s)}    \left|\tau\sqrt{\L}e^{-\tau\sqrt{\L}}f(z)\right|^2   \frac{dzd\tau}{\tau} \right)^{1/2} \\
\leq & C \sqrt{\frac{r_{B'}}{s}}   \left\|t\sqrt{\L}e^{-t\sqrt{\L}}f\right \|_{T_2^\infty}\\
\leq & C\sqrt{\frac{r_{B'}}{s}}    \|f\|_{{\rm BMO}_{\L}(\mathbb R^n)}.
\end{align*}
Note that $s>0$ is fixed, thus 
\begin{equation}\label{eqn:small}
    \sup_{B':\, r_{B'}\leq s\varepsilon^2}   \left( r_{B’}^{-n} \int_{0}^{r_{B‘}}\int_{B’} |F_s(y,t)|^2 \frac{dy\, dt}{t}\right)^{1/2}\leq C\|f\|_{{\rm BMO}_{\L}(\mathbb R^n)}\varepsilon.
\end{equation}
Consequently, $\eta_1(F_s)=0$ from these two cases.
 
 \smallskip
 
 To continue, we consider $\eta_2(F_s)$. 
  For any  ball $B'=B(x_{B'},r_{B'})$ with $ r_{B'}
  \geq 2^{\mathcal J_{\varepsilon}}$ sufficiently large, if $s\leq r_{B'}$, then  combine \eqref{eqn:tent-1}  and \eqref{eqn:Tent-b} to obtain
 $$
     \left( r_{B’}^{-n} \int_{0}^{r_{B‘}}\int_{B’} |F_s(y,t)|^2 \frac{dy\, dt}{t}\right)^{1/2}\leq C \varepsilon
 $$
as well. If   $s> r_{B'}$, then $s>2^{\mathcal J_{\varepsilon}}$, and it follows from \eqref{eqn:tent-2},  \eqref{eqn:tent-3} and \eqref{eqn:Tent-b} to see
\begin{align*}
  \left( r_{B'}^{-n} \int_{0}^{r_{B'}}\int_{B'} |F_s(y,t)|^2 \frac{dy\, dt}{t}\right)^{1/2}\leq C \sum_{k=0}^\infty  \frac{1}{2^k}  \varepsilon\leq C\varepsilon.
\end{align*}
 Thus $\eta_2(F_2)=0$.

It remains to consider $\eta_3(F_s)$.  For any  ball $B'=B(x_{B'},r_{B'})$ which is far away from the origin, it suffices to assume that $r_{B'}<2^{\mathcal J_\epsilon}$ due to the argument of $\eta_2(F_s)=0$. Furthermore, assume that  $B'\subseteq  (B(0, 2^{\mathcal J_\varepsilon +1} ))^c$, then $2B'\subseteq  (B(0, 2^{\mathcal J_\varepsilon} ))^c$. This, combined with  \eqref{eqn:tent-1}  and \eqref{eqn:Tent-c}, implies that %if $s\leq r_{B'}$,
$$
   \left( r_{B’}^{-n} \int_{0}^{r_{B‘}}\int_{B’} |F_s(y,t)|^2 \frac{dy\, dt}{t}\right)^{1/2}\leq C \varepsilon  \   \   \text{if}\  \  s\leq r_{B'}.
$$

Otherwise, if $s>r_{B'}$, then 
\begin{align*}
  & \left( r_{B’}^{-n} \int_{0}^{r_{B‘}}\int_{B’} |F_s(y,t)|^2 \frac{dy\, dt}{t}\right)^{1/2}\\
      \leq & C   \sum_{k=0}^\infty  \frac{1}{2^k}  \left(  \int_0^{2^{k+1}s} \frac{1}{|B(x_{B'},2^{k+1}s)|}\int_{B(x_{B'},2^{k+1}s)}    \left|\tau\sqrt{\L}e^{-\tau\sqrt{\L}}f(z)\right|^2   \frac{dzd\tau}{\tau} \right)^{1/2}.
\end{align*}
Using \eqref{eqn:Tent-b} again, it suffices to consider the case $r_{B'}<s<2^{\mathcal J_\epsilon}$.

let $N_{\varepsilon}\in \mathbb N_+$ such that $\sum_{k=N_{\varepsilon}+1}^\infty  2^{-k} <\varepsilon$, then whenever $B'\subseteq (B(0,2^{\mathcal J_{\varepsilon}  +N_{\varepsilon}+1}))^c$,  it's clear that $B(x_{B'},2^{k+1}s)\subseteq (B(0,2^{\mathcal J_{\varepsilon}  }))^c$ for $0\leq k\leq N_{\varepsilon}$.
Hence  we use \eqref{eqn:Tent-c} to see
\begin{align*}
      \left( r_{B’}^{-n} \int_{0}^{r_{B‘}}\int_{B’} |F_s(y,t)|^2 \frac{dy\, dt}{t}\right)^{1/2}
         \leq & C\sum_{k=0}^{N_{\varepsilon}} \frac{1}{2^k}\varepsilon  +\sum_{k=N_{\varepsilon} +1}^\infty \frac{1}{2^k} \|f\|_{{\rm BMO}_{\L}}\\
      \leq & C(\|f\|_{{\rm BMO}_{\L}}+1) \varepsilon.
\end{align*}
Hence $\eta_3(F_s)=0$. We complete the proof of  Lemma \ref{lem:Poisson-CMO}.
\end{proof}
%%%%%%%%%%%%%%%%

\bigskip

\begin{remark}
Note that the constant $C$ in \eqref{eqn:tent-1} and \eqref{eqn:tent-2} is independent of $s>0$, hence 
\begin{align*}
   \sup_{s>0}\left\|e^{-s\sqrt{\L}}f\right\|_{{\rm BMO}_{\L}}&\approx   \sup_{s>0} \left\|t\sqrt{\L}e^{-t\sqrt{\L}}(e^{-s\sqrt{\L}}f)\right\|_{T_2^\infty}\\
   &\leq C \left\|t\sqrt{\L}e^{-t\sqrt{\L}} f\right\|_{T_2^\infty}\approx \|f\|_{{\rm BMO}_{\L}}.
\end{align*}

\end{remark}

\begin{remark}
It's natural to continue to study the behavior of  the maximal operator $\mathcal P^*$ defined by
$$
   \mathcal P^* f(x)=\sup_{s>0} \left|e^{-s}f(x)\right|,
$$
on ${\rm CMO}_{\L}(\mathbb R^n)$. Recall that 
 it has been shown in \cite{DGMTZ} that $\mathcal P^*$ is bounded on ${\rm BMO}_{\L}(\mathbb R^n)$. On one hand,  this result cannot deduce our \eqref{eqn:semigroup-1}. On the other hand, the condition $r_{B'}\leq s\varepsilon^2$ in \eqref{eqn:small} hints that it seems hard to estimate  the limit behaviour $\eta_1(\mathcal P^*f)$ trivially.

\end{remark}

\bigskip

Based on Lemma \ref{lem:Poisson-CMO}, we continue to finish the remaining argument of Theorem \ref{thm:approx}.

\begin{proof}[Proof of Theorem \ref{thm:approx}]
For any $f\in {\rm CMO}_{\L}(\mathbb R^n)$, note that ${\rm CMO}_{\L}(\mathbb R^n)$ is the closure of $C_{c}^\infty(\mathbb R^n)$ in ${\rm BMO}_{\L}(\mathbb R^n)$, hence there exists a sequence $\{f_k\}_k $ in $C_{c}^\infty(\mathbb R^n)$ such that 
$$
    \lim_{k\to \infty}\left\| f_k-f\right\|_{{\rm BMO}_{\L}(\mathbb R^n)}=0,
$$
this, combined with the (uniform) boundedness of $e^{-t\sqrt{\L}}$ on ${\rm BMO}_{\L}(\mathbb R^n)$ for $t>0$, deduces that for any $k\in \mathbb N_+$, 
\begin{align*}
\left\|e^{-t\sqrt{\L}}f-f \right\|_{{\rm BMO}_{\L}} &\leq \left\|e^{-t\sqrt{\L}}(f-f_k) \right\|_{{\rm BMO}_{\L}} + \left\|f_k-f\right\|_{{\rm BMO}_{\L}} +  \left\|e^{-t\sqrt{\L}}f_k-f_k \right\|_{{\rm BMO}_{\L}}\\
&\leq C \left\|f_k-f\right\|_{{\rm BMO}_{\L}}+  \left\|e^{-t\sqrt{\L}}f_k-f_k \right\|_{{\rm BMO}_{\L}},
\end{align*}
where the positive constant $C$ is independent of $t$.

 Hence,  to prove \eqref{eqn:approx-identity-CMO}, it suffices to verify it for $f\in C_c^\infty(\mathbb R^n)$.  
 
 We start by showing that for any $f\in C_c^\infty(\mathbb R^n)$,
  $\displaystyle \lim_{t\to 0} e^{-t\sqrt{\L}}f(x)=f(x)$ uniformly for all $x\in \mathbb R^n$, which is crucial for our purpose.

Note that by the Kato--Trotter formula, 
there exists constants $C,\, c>0$ such that for all $x,y\in \mathbb R^n$ and $t>0$,
\begin{equation}\label{eqn:compare-heat}
0\leq K_{e^{t\Delta}}(x,y)- K_{e^{-t\L}} (x,y)\leq C  \left(\frac{\sqrt{t}}{\sqrt{t}+\rho(x)}\right)^{2-\frac{n}{q_1}} \frac{1}{t^{n\over 2}} \exp\left(-\frac{|x-y|^2}{ct}\right),
\end{equation}
where $q_1>q\geq n/2$ is the index in the proof of Theorem~\ref{thm:Riesz-CMO};
see \cite[Proposition 7.13]{BDL}. Let 
$$
\delta=\min\left\{2-\frac{n}{q_1}, \frac{1}{2}\right\},
$$
then combining the Bochner subordination formula
$$
e^{-t \sqrt{\L}} =\frac{1}{2 \sqrt{\pi}} \int_{0}^{\infty} \frac{t}{\sqrt{s}} \exp \left(-\frac{t^{2}}{4 s}\right) e^{-s \L} \frac{d s}{s},
$$
we have that whenever $t<\rho(x)$, 
\begin{align}\label{eqn:Poisson-compare}
0\leq& K_{e^{-t\sqrt{-\Delta}}}(x,y)- K_{e^{-t\sqrt{\L}}} (x,y) \nonumber\\
=& \frac{1}{2 \sqrt{\pi}} \int_{0}^{\infty} \frac{t}{\sqrt{s}} \exp \left(-\frac{t^{2}}{4 s}\right)\left[K_{e^{s\Delta}}(x,y)- K_{e^{-s\L}} (x,y)\right] \frac{d s}{s}\nonumber\\
\leq& C\int_0^{\infty}  \frac{t}{\sqrt{s}}    \left(\frac{\sqrt{s}}{\sqrt{s}+\rho(x)}\right)^{\delta} \frac{1}{\sqrt{s^n}} \exp\left(-\frac{t^2}{4s}-\frac{|x-y|^2}{cs}\right)        \frac{d s}{s}\nonumber\\
\leq& C   \left(\frac{t}{\rho(x)}\right)^{\delta}   \int_0^{t^2+|x-y|^2}   \left(\frac{t}{\sqrt{s}}\right)^{1-\delta} \frac{1}{s^{n\over 2}}  \frac{s^{\frac{n+1}{2}}}  { (t^2+|x-y|^2)^{
\frac{n+1}{2}}     } \frac{ds}{s}  \nonumber\\
& +C  \left(\frac{t}{\rho(x)}\right)^{\delta}   \int_{t^2+|x-y|^2}^{\infty}   \left(\frac{t}{\sqrt{s}}\right)^{1-\delta} \frac{1}{s^{n\over 2}}  \frac{ds}{s}  \nonumber\\
\leq & C  \left(\frac{t}{\rho(x)}\right)^{\delta}    \frac{t^{1-\delta}}{ (t^2+|x-y|^2)^{
\frac{n+1-\delta}{2}}   }.
\end{align}

For $f\in C_c^\infty(\mathbb R^n)$, for any $\varepsilon>0$,  
 it's clear that $e^{-t\sqrt{-\Delta}}f(x)\to f(x)$ uniformly for $x\in \mathbb R^n$ as $t\to 0$.  That is, for any $\varepsilon>0$, there exists some $t_0>0$ such that for any $t\leq t_0$,
$$
    \left|e^{-t\sqrt{-\Delta}}f(x)-f(x)\right|< 
    \varepsilon,\quad \forall\, x\in \mathbb R^n.
$$
Meanwhile, suppose ${\rm supp\,}f\subseteq B(0,M_1)$ for some $M_1>0$. Then there exists $M_2>M_1$ such that for any $t\leq t_0$, 
$$
    \left|  e^{-t\sqrt{\L}}f(x)\right|<\varepsilon  \quad \text{for}\  \   |x|\geq M_2.
$$
Let 
 $$
    \rho_{\min}=\inf_{x\in B(0,M_2)} \rho(x),
 $$
 then $\rho_{\min}>0$ by Lemma \ref{lem:size-rho}.
 Without loss of generality, assume that  $t_0$ satisfies 
 $$
     \left(\frac{\sqrt{t_0}}{\rho_{\min}}\right)^{\delta}< 
     \varepsilon.
 $$
 Then for any $t\leq t_0$ and $x\in \mathbb R^n$, one may combine \eqref{eqn:Poisson-compare}  and $f\in C_c^\infty(\mathbb R^n)$ to see
 \begin{itemize}
 \item if $|x|<M_2$, 
\begin{align*}
 \left| e^{-t\sqrt{\L}}f(x)-f(x)\right|&\leq   \left| e^{-t\sqrt{\L}}f(x)-e^{-t\sqrt{-\Delta}}f(x)\right| +\left| e^{-t\sqrt{-\Delta}}f(x)-f(x)\right|  \nonumber\\
  & \leq  CMf(x) \cdot  \varepsilon +     \varepsilon \nonumber\\
  &\lesssim  \varepsilon.
\end{align*}

\smallskip

\item if $|x|\geq M_2$, 
$$
    \left| e^{-t\sqrt{\L}}f(x)-f(x)\right|=\left| e^{-t\sqrt{\L}}f(x)\right|   
\leq   \varepsilon  
$$
 \end{itemize}
 Therefore,

 \begin{align}\label{eqn:Poisson-2}
   \left| e^{-t\sqrt{\L}}f(x)-f(x)\right|\lesssim \varepsilon \quad \text{for}\   \   t\leq t_0 \   \text{and}\         x\in \mathbb R^n.
 \end{align}
 Hence $e^{-t\sqrt{\L}}f\to f$ uniformly for all $x\in\mathbb R^n $ as $t\to 0$.
 
 It remains to prove \eqref{eqn:approx-identity-CMO} for $f\in C_c^\infty(\mathbb R^n)$.
 
 For any ball $B=B(x_B,r_B)$ and for any   $t<t_0$,
 \begin{itemize}
 \item if $r_B<\rho(x_B)$, then  by \eqref{eqn:Poisson-2},
 \begin{align*}
 \frac{1}{|B|}\int_B \left|  e^{-t\sqrt{\L}}f(x)-f(x) -\left(e^{-t\sqrt{\L}}f-f\right)_B\right|^2dx\lesssim \sup_{x\in B}   \left|  e^{-t\sqrt{\L}}f(x)-f(x)\right|\lesssim \varepsilon.
 \end{align*}

 \item if $r_B\geq \rho(x_B)$, similarly, by \eqref{eqn:Poisson-2} again, we also have 
   \begin{align*}
 \frac{1}{|B|}\int_B \left|  e^{-t\sqrt{\L}}f(x)-f(x)  \right|^2  dx\lesssim  \varepsilon.
 \end{align*}
 \end{itemize}
Therefore, we complete the proof of Theorem \ref{thm:approx}.
\end{proof}

\begin{remark}
Similarly, due to \eqref{eqn:compare-heat} and the fact that for any $f\in C_c^\infty(\mathbb R^n)$,  $e^{t\Delta}f(x)\to f(x)$ uniformly for all $x\in \mathbb R^n$ as $t\to 0$,
the approximation to the identity arising from the heat semigroup associated to $\L$ also matches ${\rm CMO}_{\L}(\mathbb R^n)$. That is, for any $f\in {\rm CMO}_{\L}(\mathbb R^n)$, we have 
$$
   \lim_{t\to 0} e^{-t\L}f=f\quad {\rm in }\  \   {\rm BMO}_{\L}(\mathbb R^n).
$$
In particular, if $f\in C_c^\infty(\mathbb R^n)$, then  we also have $\displaystyle \lim_{t\to 0} e^{-t\L}f(x)=f(x)$ uniformly for all $x\in \mathbb R^n$.
\end{remark}

\begin{remark}
Recall that 
 $$
      \lim_{t\to 0} e^{-t\sqrt{\L}}f=f\quad {\rm in }\  \   L^p(\mathbb R^n)
 $$
 for $1\leq p<\infty$; see \cite{BDY}.  Our Lemma \ref{lem:Poisson-CMO} and Theorem \ref{thm:approx} can be regarded as certain endpoint results. All of these will be useful in further  applications such as function spaces, density arguments, partial differential equations and so on.
\end{remark}

 \bigskip

%%%%%%%%%%%%%%%%%%%%%%%%%%%%%

 \noindent{\bf Acknowledgments.}
Li is supported by ARC DP220100285.
 Wu is  supported by  NNSF of China \# 12201002,  Anhui NSF of China \# 2208085QA03 and Excellent University Research and Innovation Team in Anhui Province \# 2024AH010002.

 \medskip

 %%%%%%%%%%%%%%%%%%%%%%%%%%%%%%%%%%%%%%%%%%%%%%%%%%%%%%

%%%%%%%%%%%%%%%%%%%%%%%%%%%%%%%%%%%%%%%%%%%%%%%%%%%%%%

\end{document}